\DeclareMathAlphabet{\mathpzc}{OT1}{pzc}{m}{it}
\newcommand{\R}{\mathbb R}
\newcommand{\E}{\mathbb E}
\newcommand{\e}{\varepsilon}
\newcommand{\I}{\mathbf 1}
\DeclareMathOperator*{\var}{Var}
\newtheorem{rem}{Remark}[section]
\newtheorem{lem}{Lemma}[section]
\newtheorem{prop}{Proposition}[section]
\newtheorem{theo}{Theorem}[section]
\begin{document}

\title{ Improved rates for Wasserstein deconvolution with ordinary smooth error
 in dimension one}

\author{J\'er\^ome Dedecker$^{(1)}$, Aur\'elie Fischer$^{(2)}$ and Bertrand Michel$^{(3)}$}

\maketitle

{ \scriptsize
\noindent (1)  Laboratoire MAP5  UMR  CNRS 8145,  Universit\'e Paris  Descartes,
Sorbonne Paris Cit\'e \\
(2) LPMA UMR CNRS  7599, Universit\'e Paris Diderot, Sorbonne Paris Cit\'e  \\
(3) LSTA, Universit\'e Pierre et Marie Curie}

\begin{abstract}
This paper deals with the estimation of a probability measure on the real line  from data observed with an additive noise. We are interested in  rates of convergence for the Wasserstein metric of order $p\geq 1$. The distribution of the errors is assumed to be known and to belong to a class of supersmooth or ordinary smooth distributions. We obtain in the univariate situation an improved upper bound in the ordinary smooth case and less restrictive conditions for the existing bound in the supersmooth one.
In the ordinary smooth case, a lower bound  is also provided, and numerical experiments illustrating the rates of convergence are presented.

\end{abstract}




\section{Introduction}

Consider the following convolution model: we observe $n$ real-valued random variables  $Y_1, \ldots , Y_n$
such that
\begin{equation}\label{ModeConvo}Y_i=X_i+\e_i,\end{equation} where the $X_i$'s are independent and identically distributed  according to an unknown probability $\mu$, which we want to estimate. The random variables
$\e_i$, $i=1,\dots,n$, are independent and identically distributed according to a known probability measure $\mu_\e$,
not necessarily symmetric. Moreover we assume that
 $(X_1, \ldots , X_n)$ is independent of  $(\e_1, \ldots, \e_n)$.

The purpose of the paper is to investigate rates of convergence for the estimation of the measure $\mu$ under Wasserstein metrics.
For $p\in[1,\infty)$, the Wasserstein distance $W_p$ between $\mu$ and $\nu$ is given by $$W_p(\mu,\nu)=\inf _{\pi \in \Pi(\mu,\nu)}\left(\int_{\R^2}|x-y|^p\pi(dx,dy)\right)^{\frac 1 p},$$
where $\Pi(\mu,\nu)$ is the set of probability measures on ${\mathbb R} \times {\mathbb R}$ with marginal distributions $\mu$ and $\nu$ (see \cite{RR98} or \cite{Vil08}).
The distances $W_p$  are natural metrics for comparing measures. For instance they can  compare two singular measures, which is of course impossible with the functional metrics commonly used in density estimation. Convergence of measure under Wasserstein distances is an  active domain of research in probability and statistics. For instance, the rate of convergence of the empirical measure under these metrics has been obtained recently by both \cite{DSS13} and \cite{FG} in $\R^d$ and also by \cite{bobkov2014one} in the one-dimensional framework. Moreover, Wasserstein metrics are involved in many fields of mathematics and computer sciences. For instance, in the field of Topological Data Analysis (TDA) \citep{carlsson2009topology}, Wasserstein distances recently appeared to be natural metrics  for controlling the estimation of geometric and topological features of the sampling measure and its support. Indeed, in \cite{CCSM11}, a distance function to measures is introduced to solve geometric inference problems in a probabilistic setting: if a known measure $\nu$ is close enough with respect to $W_2$ to a measure $\mu$ concentrated on a given shape, then the topological properties of the shape can be recovered by using the distance to $\nu$. More generally,  the Wasserstein loss could be used as a guide for inferring the support (see the Cantor experiment in Section~\ref{exp:Cantor}). Other results  in TDA with stability results involving the Wasserstein distances can be found in \cite{guibas2013witnessed} and \cite{chazal2014subsampling}.  In practice, the data can be observed with noise, which motivates in this framework the study of the Wasserstein deconvolution problem \citep{CCDM11}, in particular if the deconvolved measure and  the ``true measure''  are singular.

Rates of convergence in deconvolution have mostly been considered in density estimation,
for pointwise or global convergence.
Minimax rates can be found for instance in \cite{Fan91}, \cite{BT08a},
\cite{BT08b} and in the monograph of \cite{Mei09}.
In this paper, however, we  shall not assume that $\mu$ has a density with respect to the Lebesgue measure.
In this context, rates of convergence for the $W_2$ Wasserstein distance have first been studied for several noise distributions by \cite{CCDM11}.
Recently, \cite{DM13} have obtained optimal rates of convergence in the minimax sense for a class of supersmooth error distributions, in any dimension, under any Wasserstein metric $W_p$.
The result relies on the fact that lower bounds in any dimension can be deduced in this case from the lower  bounds in dimension 1. Such a method cannot be used in the ordinary smooth case, where the rate of convergence depends on the dimension.
As noticed by \cite{Fan91}, establishing optimal rates of convergence in the ordinary smooth case is more difficult than in the supersmooth one, even for pointwise estimation.

A key fact in the univariate context is that Wasserstein metrics are linked to
 integrated risks between cumulative distribution functions (cdf), see
the upper bound (\ref{eq:Rio}) below.
In dimension 1, when estimating  the density of $\mu$, optimal rates of convergence for
integrated risks  can be found in \cite{Fan91a, Fan93}. When estimating the cdf $F$
of $\mu$, optimal rates for the
pointwise and integrated quadratic risks are given in \cite{HL}, where
it is shown in particular that the  rate $\sqrt n$ can be reached when the error distribution is ordinary smooth with a smoothness index less than $1/2$.
Concerning the pointwise estimation
of $F(x_0)$, optimal rates for the
quadratic risk are also given in \cite{DGJ11}, when the density of $\mu$  belongs
to a Sobolev class.

The case $\beta=0$ in the upper bound (3.9) of \cite{HL} corresponds to the case where
no assumption (except a moment assumption) is made on the measure $\mu$ (in particular $\mu$ is not assumed to be absolutely continuous with respect to the Lebesgue measure). This is precisely the case which we want to consider in the present paper. However the results by \cite{HL}
cannot be applied to the Wasserstein deconvolution problems for two reasons:
firstly, the integrated quadratic risk for estimating a cdf is not linked to
Wasserstein distances, and secondly, the estimator of the cdf of $\mu$ proposed
in \cite{HL} is  the cdf of a signed measure, and is not well defined
as an estimator of $\mu$ for the Wasserstein metric.

In the present contribution, we propose in the univariate situation an improved upper bound for deconvolving $\mu$ under $W_p$, and a lower bound when the error is ordinary smooth. We  recover the optimal rate of convergence in the supersmooth case with slightly weaker regularity conditions than in \cite{DM13}. The estimator
of the cdf $F$ of $\mu$ is built in two steps:
firstly, as in \cite{HL}, we define a preliminary estimator through a
classical kernel deconvolution method, and secondly we take an appropriate isotone
approximation of this estimator. For controlling  the random term, we use a moment inequality on the cdfs, which is due to \cite{E71}. To be complete, we show in Section~\ref{sec:lowerbound} that for $p>1$, the Wasserstein deconvolution problem is different from the cdf deconvolution problem with loss $L_p$ associated to 
\`Ebralidze's inequality (see (\ref{loss}) for the definition).

The paper is organized as follows. In Section \ref{sec:withouterror},
some facts about the case without error are recalled and discussed. The upper bounds for
Wasserstein  deconvolution  with supersmooth or ordinary smooth errors are given in Section~\ref{sec:upperbound}, and Section~\ref{sec:lowerbound} is about lower bounds. The upper bound is proved in Section~\ref{sec:proof}. Section \ref{sec:simu} presents the implementation of the method and some experimental results.
In particular, observed rates of convergence are compared with the theoretical bounds for the Wasserstein metrics $W_1$ and $W_2$, and we study as an illustrative example the deconvolution of the  uniform measure on the Cantor set.

\section{On the case without error}\label{sec:withouterror}

We begin by considering the simple case when one observes directly  $X_1, \dots, X_n$ with values in $\R$ without error.
Let us recall some results for the quantities $W_p(\mu_n,\mu)$, where $\mu_n$ is the empirical measure, given by $$\mu_n=\frac 1 n \sum_{i=1}^n\delta_{X_i}.$$
Let $F$ be the cdf of $X_1$, $F_n$  the cdf of $\mu_n$, and let $F^{-1}$ and $F_n^{-1}$ be their usual cadlag inverses. Recall that, for any $p \geq 1$,
\begin{equation} \label{WpFinv}
W_p^p(\mu_n,\mu)=\int_0^1|F_n^{-1}(u)-F^{-1}(u)|^p du \, ,
\end{equation}
and if $p=1$:
$$
W_1(\mu_n,\mu)=\int|F_n ^{-1} (u)-F^{-1}(u)| du   = \int|F_n(t)-F(t)| dt  \,  .
$$

The case $p=1$ has been well understood since the paper by \cite{BGM99}. The random variable $\sqrt{n}W_1(\mu_n,\mu)$ converges in distribution to $\int |B(F(t))|dt$, where $B$ is a standard Brownian bridge, if and only if \begin{equation}
\int_0^\infty\sqrt{P(|X|>x)}dx<\infty\label{eq:momfaible} ,
\end{equation}
or equivalently if 
\begin{equation*}
\int_0^\infty\sqrt{ F(x) (1-F(x)) }dx<\infty .
\end{equation*}
More recently, \cite{bobkov2014one} have shown that the rate of  $\E W_1(\mu_n,\mu)$ can be characterized by the quantities
$$ \int_{4 n F(x) (1-F(x))\leq 1 }  F(x) (1-F(x) )  dx  \quad \textrm{ and } \quad  \int_{4 n F(x) (1-F(x)) > 1 }   \sqrt { F(x) (1-F(x) ) }   dx  . $$ 
More precisely,  the rate $1 / \sqrt n $ is achieved if and only if \eqref{eq:momfaible} is satisfied. When this is not the case, $\E W_1(\mu_n,\mu)$ may decay at an arbitrary slow rate. See the Theorems 3.3 and 3.5 in their paper.

For $p>1$, the situation is more complicated. Extra conditions are necessary  to ensure that $W_p(\mu_n,\mu)$ is of order $1/ \sqrt n$. If the random variables
take their values in a compact interval $[a,b]$ and if the cdf $F$ is continuously
differentiable on $[a,b]$ with strictly positive derivative $f$, then $n^{p/2}W_p^p(\mu_n,\mu)$ converges in distribution to
$\int_0^1 |B(u)|^p/|f\circ F^{-1}(u)|^p du$ (see Lemma 3.9.23 in
\cite{vdVW96}).  But in general, the rate can be much slower.
The convergence in distribution for the case $p=2$ has been studied in detail by \cite{BGU05}.
Under additional conditions on $F$ (see condition (2.7) in \cite{BGU05}, which requires in particular that $F$ is twice differentiable), the rate of convergence depends on the behavior of $F^{-1}$ in a neighborhood of 0 and 1.
For instance, if $$F(t)=\left(1-\frac 1 {t^{\alpha-1}}\right)\I_{\{ t>1\}},$$ where $\alpha>3$, it follows from Theorem 4.7 in \cite{BGU05} that \begin{equation}n^{(\alpha-3)/(\alpha-1)}W_2^2(\mu_n,\mu)\label{eq:ex}\end{equation} converges in distribution. The limiting distribution is explicitly given in \cite{BGU05}. 

The rates of decay of $\E W_p(\mu_n,\mu)$ and  $[ \E W_p ^p(\mu_n,\mu)]^{1/p}$ have been studied more recently in \cite{bobkov2014one}. They show that these quantities decay at the standard rate $1/\sqrt n$ if and only if 
$$
J_p (\mu) = \int_{ \R}   \frac{ \left[ F(x) (1-F(x) ) \right]^{p/2} dx}{f(x) ^{p-1}}  < \infty \, ,
 $$ 
where $f$ is the density of the absolutely continuous component of $\mu$. In particular (see their Theorem 5.6), they show that 
 $$[ \E W_p ^p(\mu_n,\mu)]^{1/p} \leq  \frac {5p}{\sqrt{n+2}} J_p ^{1/p} (\mu) . $$
However, this approach cannot be applied when the measure $\mu$ and the Lebesgue measure are singular.
An alternative approach to obtain the rate of decay of $ \E W_p ^p(\mu_n,\mu) $   is to use the following inequality,
due to \cite{E71} (see also Sections 7.4 and 7.5 in \cite{bobkov2014one}) :
for any $p \geq 1$,
\begin{equation}
W_p^p(\mu, \nu)  \leq \kappa_p \int |x|^{p-1} |F_\mu-F_\nu|(x) dx \, ,
\label{eq:Rio}\end{equation}
where $\kappa_p= 2^{p-1} p$.
Starting from (\ref{eq:Rio}), we get that
\begin{align*}
\E W_p^p(\mu_n,\mu) & \leq  \int  |x|^{p-1}  \E | F_n(x) - F(x) |  dx \\
& \leq  \int  |x|^{p-1} \sqrt{  \E | F_n(x) - F(x) |^2 } dx \\
& \leq  \frac 1 {\sqrt n}  \int  |x|^{p-1}   \sqrt{ F(x) (1-F(x))}  dx
\end{align*}
where $F_n$ is the empirical cdf. Now, it is easy to see  that  this last integral is finite if and only
if
\begin{equation}\label{eq:tailcond}
\int_0^\infty |x|^{p-1}\sqrt{P(|X|>x)}dx<\infty \, .
\end{equation}
It follows that $\E W_p^p(\mu_n,\mu)\leq Cn^{-1/2}$
as soon as \eqref{eq:tailcond} is satisfied. For instance, taking $p=2$, a tail satisfying $P(|X|>x)=\mathcal O \Big(\frac 1 {x^4\log x^{2+\e}}\Big)$ gives the rate $1/ \sqrt n$. Hence, we obtain the same rate as
in (\ref{eq:ex}) for $\alpha=5$, with a slightly stronger tail condition (due to the fact that we control the expectation), but without additional assumptions on the cdf $F$. 

Since we want to estimate singular measures, we shall follow this approach in the sequel.

\section{Upper bounds for $W_p$ in deconvolution}\label{sec:upperbound}

\subsection{Construction of the estimator}

Let us start with some notations.
For $\mu$ a probability measure and $\nu$ another probability measure, with density $g$, we denote by $\mu\star g$ the density of $\mu\star \nu$, given by
$$\mu\star g(x)=\int_{\R}g(x-y)\mu(dy).$$
We further denote by $\mu^*$ (respectively $f^*$) the Fourier transform of the probability
measure $\mu$ (respectively of the integrable function $f$), that is $$\mu^*(x)=\int_\R  e^{iux}\mu(du) \quad  \mbox{and}
\quad  f^*(x)=\int_\R  e^{iux}f(u) du.$$
Finally, let $F$ be  the cumulative distribution function of $\mu$.

\medskip

The estimator $\tilde \mu_n$ of the measure $\mu$ is built in two steps:


\begin{enumerate}
\item {\bf A preliminary estimator of $F$.} Let $\lceil p\rceil $ be the least integer greater than or equal to $p$. We first introduce a symmetric nonnegative kernel $k$ such that its Fourier transform $k^*$ is $\lceil p\rceil $ times differentiable with Lipschitz $\lceil p \rceil-$th derivative and is supported on $[-1,1]$.
An example of  such a kernel is given by \begin{equation}\label{eq:kernelp}k(x)=C_p\left[\frac{(2\lceil p/2\rceil+2)\sin \frac{x}{2\lceil p/2\rceil+2}}x\right]^{2\lceil p/2\rceil+2},\end{equation} where $C_p$ is such that $\int k(x)dx=1$.

 We  define now a preliminary estimator $\hat F_n$ of $F$:
\begin{equation} \label{eq:naive-estim}
\hat{F_n}(t)=\frac 1 {nh}\int_{-\infty}^t\sum_{k=1}^n\tilde{k}_h\left(\frac{u-Y_k}{h}\right)du
\end{equation}
 where
$$
 \tilde{k}_h(x)=\frac 1 {2\pi}\int\frac {e^{iux }k^*(u)}{\mu_\e^*(-u/h)}du.
$$
Let us first give some conditions under which these quantities are well defined. Clearly, $\tilde{k}_h(x)$ is well defined as soon as $\mu_\e^*$
does not vanish, since in that case it is the Fourier transform of a continuous and compactly supported function (it can be easily checked that $\tilde{k}_h(x)$ is a real function). In the sequel, we shall
always assume that $r_\e= 1/ \mu_\e^*$ is at least two times continuously differentiable. In that case,
the function $w(u)=\frac {k^*(u)}{\mu_\e^*(-u/h)}$ is two times differentiable with bounded and compactly supported derivatives.
An integration by parts yields
$$
   \tilde{k}_h(x)=- \frac 1 {2\pi i x}\int e^{iux } w'(u) du \quad \text{and} \quad
   \tilde{k}_h(x)=- \frac 1 {2\pi x^2}\int e^{iux } w''(u) du.
$$
It follows that $\tilde{k}_h$ is a continuous function such that $\tilde{k}_h(x)={\mathcal O}(1/(1+x^2))$. Hence  $\tilde{k}_h$ belongs to
${\mathbb L}^1(dx)$ and $\hat F_n$ is well defined. Now the inverse Fourier formula gives that
$
 \tilde{k}^*_h(x)= \frac{k^*(u)}{\mu_\e^*(u/h)} 
$.
Consequently $\tilde{k}^*_h(0)=1$, proving that $\int \tilde{k}_h(x) dx=1$ and that $\lim_{t \rightarrow \infty} \hat F_n(t)=1$.

  However, this estimator $\hat{F_n}$, based on the standard deconvolution kernel density estimator $\tilde{k}_h$ first introduced by \cite{CH88}, is not a cumulative distribution function since it is not necessarily non-decreasing.

\item {\bf Isotone approximation.}
We need to define an estimator $\tilde{F_n}$ of $F$ which is a cumulative distribution function.  We choose the estimator $\tilde{F_n}$ as an approximate minimizer over all distribution functions of the quantity $\int_\R|x|^{p-1}|\hat{F}_n-G|(x)dx$. More precisely,
given $\rho >0$, let $\tilde F_n$ be such that, for every distribution function $G$,
\begin{equation*}\label{def:tildeF}
\int |x|^{p-1}|\hat{F}_n-\tilde{F_n}|(x)dx\leq \int |x|^{p-1}|\hat{F}_n-G|(x)dx+\rho \, .
\end{equation*}
Here, $\rho$ may be chosen equal to $0$ (best isotone approximation) but  the condition
$\rho= O(n^{-1/2})$ is the only condition required to get the rates of Section
\ref{MR} below.

The estimator $\tilde \mu_n$ is then defined by:
\begin{equation}
\label{eq:defest}
\text{$\tilde \mu_n$ is the probability measure with distribution function $\tilde F_n$.}
\end{equation}
\end{enumerate}\begin{rem}
The second step is different from that of \cite{DM13}, who choose $\tilde \mu_n$ as the (normalized)
positive part of $\mu_n$. As we shall see, the isotone approximation allows to get better rates of convergence
in the ordinary smooth case. The superiority of the isotone estimator will also be clearly
highlighted through the simulations (see Section \ref{sec:simrates}). However, this approach
works only in the one-dimensional case.

  One may argue that the estimator $\tilde{F_n}$ is not explicit, and can be quite difficult
  to compute, because the minimization is done over an infinite dimensional set. In fact, this is
  not an issue, because powerful algorithms have been developed to deal with this situation.
  In Section \ref{sec:simu}, we shall
  use the function {\ttfamily gpava}  from the R package {\ttfamily isotonic} \citep{mair2009isotone}
  (see Section \ref{sec:firststep} for more details).

\end{rem} \subsection{First upper bounds for $W^p_p(\tilde{\mu}_n,\mu)$.}
The control of $W^p_p(\tilde{\mu}_n,\mu)$ is done in three steps:
\begin{enumerate}
\item {\bf A bias/random decomposition.} Denoting by  $K_h$ the function $h^{-1}k( \cdot / h) $, we have that
\begin{equation}\label{eq:start}
 W^p_p(\tilde{\mu}_n,\mu)\leq 2^{p-1}W_p^p(\mu \star K_h,\mu)+2^{p-1} W_p^p(\tilde{\mu}_n,\mu\star K_h).
\end{equation}
The non-random quantity $W_p^p(\mu \star K_h,\mu)$ is the {\it bias} of the estimator $\tilde \mu_n$.

\item {\bf Control of the bias.} Let $V_h$ be a random variable with distribution $K_h$ and
independent of $X_1$, in such a way that the distribution of $X_1 + V_h$ is $\mu\star K_h$.
By definition of $W_p$, we have
\begin{equation}
W^p_p(\mu\star K_h,\mu) \leq
\E |X_1+V_h-X_1|^p = \E|V_h|^p=h^p\int |x|^pk(x)dx.\label{eq:bias}
\end{equation}

\item {\bf Control of the random term.} Note that
$$
\E[\hat{F}_n(t)]= \int_{-\infty}^t \mu\star K_h (x) dx
$$ is the cdf of $\mu \star K_h$.   Applying   \`Ebralidze's inequality
(\ref{eq:Rio}), we obtain that
$$
W_p^p(\tilde{\mu}_n,\mu\star K_h)\leq \kappa_p \int |x|^{p-1}|\tilde{F}_n-\E[\hat{F}_n]|(x)dx \, .
$$
Now, by the triangle inequality and the definition of $\tilde{F}_n$,
\begin{align}
 W_p^p(\tilde{\mu}_n,\mu\star K_h)
&\leq \kappa_p \left(\int |x|^{p-1}|\tilde{F}_n-\hat{F}_n|(x)dx+\int |x|^{p-1}|\hat{F}_n-\E[\hat{F}_n]|(x)dx\right)\nonumber\\
&\leq \rho +2\kappa_p \int|x|^{p-1}|\hat{F}_n-\E[\hat{F}_n]|(x)dx \, .
\label{eq:deb} \end{align}
\end{enumerate}
From (\ref{eq:start}), (\ref{eq:bias}) and (\ref{eq:deb}),
to get explicit rates of convergence for ${\mathbb E}[W^p_p(\tilde{\mu}_n,\mu)]$,
it remains to control the term
$$
{\mathbb E} \left ( \int|x|^{p-1}|\hat{F}_n-\E[\hat{F}_n]|(x)dx  \right )\, .
$$

\begin{rem}
Another main difference between the present paper and \cite{DM13} is the use of
\`Ebralidze's inequality (\ref{eq:Rio}) to control the random term. In \cite{DM13}
the term  $W_p^p(\tilde{\mu}_n,\mu\star K_h)$ (for another choice of $\tilde \mu_n$)
is bounded by a term involving the variation norm between $\tilde \mu_n$ and $\mu$.
In our case, this upper bound would give  a worse rate of convergence.

Note that Inequality (\ref{eq:Rio}) is used here to control the random term only.
A possible alternative approach is to use (\ref{eq:Rio}) directly, as in the case without error
(see Section \ref{sec:withouterror}). This would give the upper bound
\begin{equation}\label{badbound}
W^p_p(\tilde{\mu}_n,\mu) \leq \kappa_p \int |x|^{p-1} |\tilde F_n-F|(x) dx \, .
\end{equation}
 In that case, the bias term would be
$$
  \int |x|^{p-1} |\E[\hat{F}_n]-F|(x) dx \, .
$$
However, without extra regularity assumptions on $\mu$, this would give a bias term of order
$h$, and then the same rate of convergence as in the case $p=1$, that is $n ^{1/(2 \beta + 1)}$
under the assumptions of Theorem  
\ref{theo:upperbounds}  (Item 2)  of the next section.  But this rate is always too slow
for $p>1$, see again  Theorem \ref{theo:upperbounds}. Moreover, there
is no hope to obtain a better rate from (\ref{badbound}) because $n ^{1/(2 \beta + 1)} $ is  also the minimax rates to estimate $F$
with the loss function
\begin{equation}\label{loss}
L_p(G,F)= \int |x|^{p-1} |G-F|(x) dx \, ,
\end{equation}
This last assertion comes from the lower bound stated in Theorem~\ref{Theo:lowercdfLp} of   Section \ref{sec:lowerbound}.
\end{rem}

\subsection{Main results}\label{MR}

\bigskip

Let $r_\e= 1/ \mu_\e^*$, and let
$r_\e^{(\ell)}$ be the $\ell$-th derivative of $r_\e$. Let $m_0$
denote the least integer strictly greater than
$p+\frac 12$, and $m_1$ be the least integer strictly greater than $p-\frac 12$.

Our first result is a general proposition which gives an upper bound for
$\E W_p^p(\tilde{\mu}_n, \mu)$ involving a tail condition on $Y$ and the regularity
of $r_\e$.
\begin{prop}
\label{prop:gen}
Let $\rho\leq n^{-1/2}$, and let $\tilde{\mu}_n$ be the estimator defined in (\ref{eq:defest}). Assume that $r_\e$ is $m_0$ times continuously differentiable. For any $h\leq 1$, we have
$$\E W_p^p(\tilde{\mu}_n, \mu)\leq\frac 1 {\sqrt{n}} +h^p 2^{p-1}\int|x|^pk(x)dx\\+ \frac {C} {\sqrt{n}} (A_1+A_2+ A_3 +A_4)
$$
where
\begin{align*}
A_1 &= \Big( \sup_{t \in [-2,2]}\sum_{\ell=0}^{1}|r_\e^{(\ell)}(t)| \Big)\int_0^\infty |x|^{p-1}\sqrt{P\left(|Y|\geq  x \right)}dx \\
A_2 &= \sup_{t \in [-2,2]}\sum_{\ell=0}^{m_0}|r_\e^{(\ell)}(t)| \\
A_3 &= \left [\E |Y|^{2p-\frac 1 2 }\int_{-1/h}^{1/h}
\frac{|r_\e(x)|^2}{|x|^2}\I_{[-1,1]^c}(x)dx \right]^{1/2}\\
A_4 &=  \left[ \sum_{\ell=0}^{m_1}\int_{-1/h}^{1/h}
\frac{|r_\e^{(\ell)}(x)|^2}{|x|^2}\I_{[-1,1]^c}(x)dx \right ]^{1/2}.
\end{align*}
\end{prop}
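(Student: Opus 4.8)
\medskip
\noindent\textbf{Proof sketch.}\quad The plan is to use the three reduction steps preceding the statement and then to estimate the only remaining quantity, the centered random term. Combining (\ref{eq:start}), (\ref{eq:bias}), (\ref{eq:deb}) and $\rho\le n^{-1/2}$ gives
\begin{equation*}
\E W_p^p(\tilde\mu_n,\mu)\le \frac{2^{p-1}}{\sqrt n}+2^{p-1}h^p\int|x|^pk(x)\,dx+2^p\kappa_p\,\E\Big(\int|x|^{p-1}\big|\hat F_n-\E[\hat F_n]\big|(x)\,dx\Big),
\end{equation*}
so everything reduces to proving that $R_n:=\E\big(\int|x|^{p-1}|\hat F_n-\E[\hat F_n]|(x)\,dx\big)\le C n^{-1/2}(A_1+A_2+A_3+A_4)$. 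Writing $\hat F_n(x)=\tfrac1n\sum_{k=1}^n g_h(x,Y_k)$ with $g_h(x,y)=\int_{-\infty}^{(x-y)/h}\tilde k_h(v)\,dv$, Tonelli's theorem, the Cauchy--Schwarz inequality and the independence of the $Y_k$'s give
\begin{equation*}
R_n\le \int|x|^{p-1}\sqrt{\var(\hat F_n(x))}\,dx=\frac1{\sqrt n}\int|x|^{p-1}\sqrt{\var(g_h(x,Y_1))}\,dx .
\end{equation*}

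Next I would split $g_h$ into a jump part and a smooth remainder. Since $\tilde k_h\in\mathbb L^1$ with $\int\tilde k_h=1$, one has $g_h(x,y)=\I_{\{y<x\}}+\psi_h(x-y)$, where $\psi_h=(h^{-1}\tilde k_h(\cdot/h))\star\I_{(0,\infty)}-\I_{(0,\infty)}$; using $\tilde k_h^*(\xi)=k^*(\xi)/\mu_\e^*(\xi/h)=k^*(\xi)r_\e(\xi/h)$ and $\tilde k_h^*(0)=1$ one gets
\begin{equation*}
\psi_h^*(\xi)=i\,\frac{k^*(h\xi)\,r_\e(\xi)-1}{\xi},\qquad \|\psi_h\|_2^2=\frac1{2\pi}\int_{-1/h}^{1/h}\frac{|k^*(h\xi)r_\e(\xi)-1|^2}{\xi^2}\,d\xi+\frac h\pi .
\end{equation*}
Using $\sqrt{\var(U+V)}\le\sqrt{\var(U)}+\sqrt{\var(V)}$ together with $\var(V)\le\E[V^2]$, we get $\sqrt{\var(g_h(x,Y_1))}\le\sqrt{F_Y(x)(1-F_Y(x))}+\sqrt{\E[\psi_h(x-Y_1)^2]}$, where $F_Y$ is the cdf of $Y_1$. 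The first term gives $A_1$: from $F_Y(x)(1-F_Y(x))\le P(|Y_1|\ge|x|)$ one obtains $\int|x|^{p-1}\sqrt{F_Y(x)(1-F_Y(x))}\,dx\le 2\int_0^\infty x^{p-1}\sqrt{P(|Y|\ge x)}\,dx\le A_1$, since $\sup_{t\in[-2,2]}\sum_{\ell\le1}|r_\e^{(\ell)}(t)|\ge|r_\e(0)|=1$. It remains to bound $\int|x|^{p-1}\sqrt{\E[\psi_h(x-Y_1)^2]}\,dx$.

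For this I would split $\R$ at $|x|=2$. On $\{|x|\le2\}$, Cauchy--Schwarz in $x$ over an interval of length $4$ and translation invariance give $\int_{|x|\le2}|x|^{p-1}\sqrt{\E[\psi_h(x-Y_1)^2]}\,dx\le C\|\psi_h\|_2$; splitting the integral in the formula for $\|\psi_h\|_2^2$ into $\{|\xi|\le1\}$ and $\{1<|\xi|\le1/h\}$, and using on the first a first-order Taylor expansion around $0$ (where $k^*=r_\e=1$, and $|h\xi|\le|\xi|$ since $h\le1$) and on the second $|k^*(h\xi)r_\e(\xi)-1|^2\le2\|k^*\|_\infty^2|r_\e(\xi)|^2+2$, one gets $\|\psi_h\|_2\le C(A_2+A_4)$. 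On $\{|x|>2\}$, Cauchy--Schwarz in $x$ against the weight $|x|^{-3/2}$ gives
\begin{equation*}
\int_{|x|>2}|x|^{p-1}\sqrt{\E[\psi_h(x-Y_1)^2]}\,dx\le C\Big(\E\!\int|x|^{2p-1/2}\psi_h(x-Y_1)^2\,dx\Big)^{1/2};
\end{equation*}
bounding $|x|^{2p-1/2}\le C(|Y_1|^{2p-1/2}+|x-Y_1|^{2p-1/2})$ and substituting $x\mapsto x-Y_1$, the $|Y_1|^{2p-1/2}$ part — keeping only the high-frequency part of the integral above — produces $A_3$, while the $|x-Y_1|^{2p-1/2}$ part is a weighted $\mathbb L^2$ norm of $\psi_h$ which, transferred to the Fourier side, produces the derivatives $r_\e',\dots,r_\e^{(m_1)}$ and is controlled by $A_4$. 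Collecting the bounds yields $R_n\le Cn^{-1/2}(A_1+A_2+A_3+A_4)$, hence the proposition.

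The step I would expect to be the real obstacle is the last one. The remainder $\psi_h$ is not integrable and tends to a Heaviside function, so its (weighted) $\mathbb L^2$ norms must be handled through Plancherel rather than through $\mathbb L^1$ Fourier estimates; one has to check that the regularity of $k^*$ (as provided e.g. by the kernel (\ref{eq:kernelp})) is exactly what makes the weighted norm in the region $\{|x|>2\}$ finite, and one must keep careful track of all derivatives of $r_\e$ up to order $m_1$ --- and of the supremum of $r_\e$ and its first $m_0$ derivatives on the fixed interval $[-2,2]$ --- when transferring the polynomial weights through the Fourier transform, so that the final constants are universal and the whole dependence on $h$ is confined to the explicit integrals $\int_{-1/h}^{1/h}$.
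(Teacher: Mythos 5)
Your reduction to bounding \(\int|x|^{p-1}\sqrt{\var(\hat F_n)(x)}\,dx\) is exactly the paper's first step, and your Plancherel formula for \(\psi_h^*\) is correct; but where the paper splits \(\hat F_n=\hat F_{1,n}+\hat F_{2,n}\) by a smooth frequency cutoff \(\phi(t/h)\) (with \(\phi=1\) on \([-1,1]\), \(0\) outside \([-2,2]\)), you split \(g_h(x,y)=\I_{\{y<x\}}+\psi_h(x-y)\), and this is where a genuine gap appears. On \(\{|x|>2\}\), after bounding \(|x|^{2p-1/2}\le C(|Y_1|^{2p-1/2}+|x-Y_1|^{2p-1/2})\) and substituting, the \(|Y_1|^{2p-1/2}\) part equals \(\E|Y|^{2p-1/2}\,\|\psi_h\|_2^2\), and \(\|\psi_h\|_2^2\) contains, besides the high-frequency piece that yields \(A_3^2\), a low-frequency piece of size up to \(C(A_2^2+1)\). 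Your aside ``keeping only the high-frequency part'' silently discards this; what your computation actually gives is an extra term of order \((\E|Y|^{2p-1/2})^{1/2}(A_2+1)\), i.e.\ a \emph{product} of the type \(A_1A_2\) (since \((\E|Y|^{2p-1/2})^{1/2}\le C(1+A_1)\) under the tail condition), which cannot be absorbed into \(C(A_1+A_2+A_3+A_4)\) with \(C=C(p)\). The paper's cutoff is designed precisely so that the moment \(\E|Y|^{2p-1/2}\) only ever multiplies \(\int_{[-1,1]^c\cap[-1/h,1/h]}|r_\e(\xi)|^2\xi^{-2}\,d\xi\); the low-frequency block is handled instead via the splitting \(\I_{\{Y+z\le x\}}\le\I_{\{Y\le x/2\}}+\I_{\{z\le x/2\}}\) and the weighted \(L^2\) bound of Lemma \ref{lem:trivial}, which is exactly what produces \(A_1\) (only \(r_\e,r_\e'\) on \([-2,2]\) times the tail integral of \(Y\)) and \(A_2\). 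To repair your argument you would need to band-limit \(\psi_h\) before introducing the moment of \(Y\), i.e.\ essentially reinstate the paper's cutoff.

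A second, smaller slip: the fixed Cauchy--Schwarz weight \(|x|^{-3/2}\) forces the spatial weight \(|x|^{2p-1/2}\) on \(\psi_h\), and controlling \(\int|z|^{2p-1/2}\psi_h(z)^2\,dz\) through Plancherel requires an even polynomial weight of degree at least \(2p-\tfrac12\), hence derivatives up to order \(\lceil p-1/4\rceil\), which exceeds \(m_1\) whenever \(p\in(m+\tfrac14,m+\tfrac12)\) for an integer \(m\) (e.g.\ \(p=2.4\): \(m_1=2\) but order \(3\) is needed), so the claim that this term ``is controlled by \(A_4\)'' fails for such \(p\). The paper avoids this by taking the weight \(|x|^{-(1+a)}\) with \(a\) arbitrarily small, so that only derivatives up to \(m_1\) appear on the Fourier side, and only afterwards relaxes \(|Y|^{2p-1+a}\le 1+|Y|^{2p-1/2}\) to reach the moment appearing in \(A_3\).
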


For the sake of readability, the proof of Proposition \ref{prop:gen} is postponed to Section \ref{sec:proof}.

We are now in a position to give the rates of convergence for
the Wasserstein deconvolution, for a class of supersmooth error distributions, and for
a class of ordinary smooth error distributions.

\begin{theo} \label{theo:upperbounds}
Let $\rho\leq n^{-1/2}$, and let $\tilde{\mu}_n$ be the estimator defined in (\ref{eq:defest}).
Assume that
 \begin{equation}\int_0^\infty |x|^{p-1}\sqrt{P(|Y|\geq x)}dx<\infty \ \mbox{and} \ \sup_{t\in[-2,2]}|r_\e^{(m_0)}(t)|<\infty \label{eq:hyp}.\end{equation}

\begin{enumerate}
\item Assume that there exist $\beta>0$, $\tilde{\beta}\geq 0$, $\gamma>0$ and $c>0$, such that for every $\ell\in \{0,1,\dots, m_1\}$ and  every $t\in \R$,
\begin{equation}
|r_\e^{(\ell)}(t)|\leq c(1+|t|)^{\tilde{\beta}}\exp(|t|^\beta/\gamma).\label{eq:super}\end{equation}

Then, taking $h= (4/(\gamma\log n ))^{1/\beta}$,
there exists a positive constant C such that $$\E W_p^p(\tilde{\mu}_n, \mu)\leq C (\log n)^{-p/\beta}.$$
\item Assume that there exist $\beta>0$ and $c>0$, such that for every $\ell\in \{0,1,\dots, m_1\}$ and  every $t\in \R$, \begin{equation}|r_\e^{(\ell)}(t)|\leq c(1+|t|)^{\beta}.\label{eq:ordinary}\end{equation}
Then, taking $h= n^{-\frac{1}{2p+(2\beta-1)_+}}$, there  exists a positive constant
C such that
\begin{equation}\label{eq:rateordi}
\E W_p^p(\tilde{\mu}_n, \mu)\leq C \psi_n \, ,
\end{equation} where
$$\psi_n=\begin{cases}
 n^{-\frac p{2p+2\beta  -1}} &\mbox{ if } \beta >\frac 1 2\\
\sqrt{\frac{\log n}n} &\mbox{ if } \beta=\frac 1 2\\
\frac 1 {\sqrt{n}} &\mbox{ if } \beta <\frac 1 2.
\end{cases}$$
\end{enumerate}
\end{theo}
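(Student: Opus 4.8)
The plan is to obtain both statements directly from Proposition \ref{prop:gen}, the only real work being to bound $A_1,\dots,A_4$ under the respective growth hypotheses and then to choose $h$ optimally. First I would dispose of the two $h$-free terms: since $m_0$ is the least integer $>p+\tfrac12\ge\tfrac32$ we have $m_0\ge2$, so under \eqref{eq:hyp} the functions $r_\e^{(0)},\dots,r_\e^{(m_0)}$ are continuous on the compact set $[-2,2]$ (the top-order one being moreover bounded there by assumption) and hence bounded; combined with the finiteness of $\int_0^\infty|x|^{p-1}\sqrt{P(|Y|\ge x)}\,dx$, also part of \eqref{eq:hyp}, this gives $A_1+A_2\le C$ with $C$ independent of $n$. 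Proposition \ref{prop:gen} (which applies, its regularity requirement being contained in \eqref{eq:hyp}) then reduces, after absorbing constants, to
\[ \E W_p^p(\tilde\mu_n,\mu)\;\le\;C\Big(\tfrac1{\sqrt n}+h^p+\tfrac1{\sqrt n}(A_3+A_4)\Big), \]
and everything comes down to the size of the integrals $I_\ell(h):=\int_{-1/h}^{1/h}|r_\e^{(\ell)}(x)|^2|x|^{-2}\I_{[-1,1]^c}(x)\,dx$, $0\le\ell\le m_1$, which feed $A_3$ and $A_4$; here the two cases part ways.

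In the ordinary smooth case, \eqref{eq:ordinary} gives $I_\ell(h)\le 2c^2\int_1^{1/h}(1+x)^{2\beta}x^{-2}\,dx$, which is of order $h^{-(2\beta-1)}$ if $\beta>\tfrac12$, of order $\log(1/h)$ if $\beta=\tfrac12$, and bounded if $\beta<\tfrac12$; moreover $\E|Y|^{2p-\frac12}<\infty$ because $\int_0^\infty|x|^{p-1}\sqrt{P(|Y|\ge x)}\,dx<\infty$ forces $P(|Y|\ge x)=o(x^{-2p})$, hence $\E|Y|^q<\infty$ for every $q<2p$. Consequently $A_3+A_4\le C\,h^{-(2\beta-1)_+/2}$ when $\beta\neq\tfrac12$, and $A_3+A_4\le C\sqrt{\log(1/h)}$ when $\beta=\tfrac12$, so that
\[ \E W_p^p(\tilde\mu_n,\mu)\;\le\;C\Big(\tfrac1{\sqrt n}+h^p+\tfrac1{\sqrt n}\,h^{-(2\beta-1)_+/2}\Big) \]
with an additional $\sqrt{\log(1/h)}$ factor on the last term when $\beta=\tfrac12$. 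The bias term $h^p$ and the random term $n^{-1/2}h^{-(2\beta-1)_+/2}$ balance exactly at $h=n^{-1/(2p+(2\beta-1)_+)}$, the choice in the statement, yielding $h^p=n^{-p/(2p+(2\beta-1)_+)}$: for $\beta>\tfrac12$ this is $\psi_n$, for $\beta<\tfrac12$ it equals $n^{-1/2}$ (matching the residual term), and for $\beta=\tfrac12$ the logarithmic factor turns $h^p=n^{-1/2}$ into $\sqrt{(\log n)/n}$.

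In the supersmooth case, \eqref{eq:super} gives $|r_\e^{(\ell)}(x)|^2\le c^2(1+|x|)^{2\tilde\beta}\exp(2|x|^\beta/\gamma)$; since this integrand is eventually increasing, $I_\ell(h)$ is dominated by its behaviour near the endpoint $1/h$, so $I_\ell(h)\le C\,h^{-(2\tilde\beta-1)}\exp(2h^{-\beta}/\gamma)$ up to a polynomial-in-$h^{-1}$ factor. With $h=(4/(\gamma\log n))^{1/\beta}$ one has $h^{-\beta}=\tfrac\gamma4\log n$, hence $\exp(2h^{-\beta}/\gamma)=n^{1/2}$ and $A_3+A_4\le C(\log n)^{a}n^{1/4}$ for some $a>0$; the random contribution $n^{-1/2}(A_3+A_4)$ is then $O((\log n)^a n^{-1/4})$, negligible against the bias $h^p\asymp(\log n)^{-p/\beta}$, and one concludes $\E W_p^p(\tilde\mu_n,\mu)\le C(\log n)^{-p/\beta}$.

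I expect the main obstacle to be the sharp asymptotic evaluation of the integrals $I_\ell(h)$ — obtaining the correct exponent in the ordinary smooth case, cleanly distinguishing the regimes $\beta\gtrless\tfrac12$ and handling the logarithmic boundary $\beta=\tfrac12$ — together with the elementary but indispensable verification that the tail assumption in \eqref{eq:hyp} provides the finite moment $\E|Y|^{2p-\frac12}$ that tames $A_3$; the rest is substitution into Proposition \ref{prop:gen} and optimization of the single parameter $h$.
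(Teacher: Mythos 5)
Your proposal is correct and follows essentially the same route as the paper: the paper's proof also consists of plugging the growth assumptions \eqref{eq:super} and \eqref{eq:ordinary} into Proposition \ref{prop:gen}, absorbing $A_1$ and $A_2$ into a constant via \eqref{eq:hyp} (with the moment $\E|Y|^{2p-\frac12}<\infty$ obtained exactly as you argue), bounding $A_3+A_4$ by $Ch^{-\tilde\beta}e^{1/(\gamma h^\beta)}$ in the supersmooth case and by $Ch^{-(2\beta-1)_+/2}$ (with $\sqrt{\log(1/h)}$ at $\beta=\tfrac12$) in the ordinary smooth case, and then substituting the stated bandwidths. Your intermediate bounds coincide with the paper's displayed ones, so there is nothing to add.
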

This result requires several comments.
\begin{rem}
In the ordinary smooth case, when $\beta< 1/2$, any bandwidth
$h=\mathcal O  (n^{-1/2p})$ leads to the rate $n^{-1/2}$.
The fact that there are three different
situations according as $\beta>1/2, \beta=1/2$ or $\beta<1/2$ has already been
pointed out in Theorem 3.2 of \cite{HL} and in Theorem 2.1 of  \cite{DGJ11} for the
estimation of the cdf $F$.
Note that the estimator $\hat F_n$ of \cite{HL} is exactly the estimator
defined in (\ref{eq:naive-estim}) (with possibly a slightly different kernel). Hence it is not always non-decreasing and cannot be used
directly to estimate $\mu$ with respect to  Wasserstein metrics.

For instance, for
a Laplace error distribution, the estimator $\hat F_n$ of \cite{HL} is such that
$$
\Big({\mathbb E}\Big [ \int |\hat F_n (t) -F(t)|^2 dt \Big] \Big)^{1/2}\leq C n^{-1/8}
 \, ,
$$
while the  rate of  convergence of our estimator for $W_1$ is
$$
 {\mathbb E}W_1(\tilde \mu_n, \mu) = {\mathbb E}\Big [ \int |\tilde F_n (t) -F(t)| dt \Big] \leq C n^{-1/5} \, .
$$
In both cases, there are no assumptions on $\mu$, except moment assumptions; in particular, $\mu$ needs 
not be absolutely continuous with respect to the Lebesgue measure. It is then a different context than 
that considered by  \cite{DGJ11} for the pointwise estimation of $F(x_0)$. In this paper, the authors always assume that $\mu$ is absolutely continuous with
respect to the Lebesgue measure, with a density $f$ belonging to a Sobolev space of
order $\alpha>-1/2$.



Note that the two rates described in this remark
are minimax (see Section \ref{sec:lowerbound} for our estimator).
\end{rem}

\begin{rem}
Since the function $H_Y(x)= P(|Y|\geq x)$ is non-increasing,
the tail condition
\begin{equation}\label{tailcond}
\int_0^\infty |x|^{p-1}\sqrt{P(|Y|\geq x)}dx<\infty
\end{equation}
in Assumption (\ref{eq:hyp})
implies that $H_Y(x)={\mathcal O}(1/ |x|^{2p})$. Hence $|Y|$ has a weak moment of order $2p$, 
which implies a strong moment of ordrer $q$ for any $q<2p$. 
Note that (\ref{tailcond}) is the same as the tail condition
(\ref{eq:tailcond}) obtained in Section \ref{sec:withouterror}
to get the rate $\E W_p^p(\mu_n,\mu)\leq Cn^{-1/2}$  in the case without noise.
Recall that, in the case without
noise when $p=1$, this condition is necessary and sufficient for the weak
convergence of $\sqrt n W_1(\mu_n, \mu)$.
Note also that 
$$
\text{(\ref{tailcond}) holds iff (\ref{eq:tailcond}) holds and}  \ \int_0^\infty |x|^{p-1}\sqrt{P(|\varepsilon|\geq x)}dx<\infty \, .
$$
The  ``if'' part follows easily from the simple inequality 
$ P(|X+\varepsilon|>x) \leq  P(|X|>x/2)+ P(|\varepsilon|>x/2)$. To prove the  ``only if''  part, note
that, since $X$ and $\varepsilon$ are independent, (\ref{tailcond}) can be written
\begin{equation}\label{fub}
\int\int_0^\infty |x|^{p-1}\sqrt{P(|X+y|\geq x)}\ dx \ \mu_{\varepsilon}(dy)<\infty \, .
\end{equation}
But this implies that 
\begin{equation}\label{X+y}
\int_0^\infty |x|^{p-1}\sqrt{P(|X+y|\geq x)}dx < \infty
\end{equation}  for $\mu_{\varepsilon}$ almost every $y$. Now
if (\ref{X+y}) holds for one $y$, then it holds for every $y$, proving that (\ref{eq:tailcond}) holds 
(and the same is true for  $\varepsilon$
by interchanging $X$ and $\varepsilon$ in (\ref{fub})). As we have seen, the tail condition on $\varepsilon$ implies that $|\varepsilon|$ has a moment of ordre $k$ for any integer $k$ strictly less than
$2p$, hence $\mu_\varepsilon^*$ is at least $k$ times continuously differentiable. 
\end{rem}

\begin{rem}
The rate $\E W_p^p(\tilde{\mu}_n, \mu)\leq C (\log n)^{-p/\beta}$
in the supersmooth case
has already been
 given in Theorem 4 of Dedecker and Michel (2013) and is valid in any dimension.
However the condition on the regularity of $r_\e$ is more restrictive
in the paper by Dedecker and Michel (2013), since it is assumed there that Condition (\ref{eq:super})
is true for $\ell\in \{0,1,\dots, \lceil p \rceil +1\}$. Note that this rate is minimax,
as stated in Theorem 2 of Dedecker and Michel (2013).
\end{rem}

\begin{rem}
Applying Proposition 1 in Dedecker and Michel (2013), if Condition (\ref{eq:ordinary})
is true
for $\ell\in \{0,1,\dots, \lceil p \rceil +1\}$,  one can build an explicit estimator
$\bar \mu_n$ such that
$\E W_p^p(\bar{\mu}_n, \mu)\leq C n^{-p/(2p+2 \beta +1)}$, which is worse than
(\ref{eq:rateordi}). The estimator $\bar \mu_n$ is the ``naive'' estimator defined in Section~\ref{sec:firststep}. However, the procedure given in Dedecker and Michel (2013) works also
when the observations $Y_i$ are ${\mathbb R}^d$-valued, whereas the estimator $\tilde \mu_n$
defined in (\ref{eq:defest}) is well defined for $d=1$ only.
 Hence, a reasonable question is: can we improve on  Proposition
 1
 of Dedecker and Michel (2013)
in any dimension?
\end{rem}

\begin{proof}
We first prove Item 1.
From Proposition $\ref{prop:gen}$ and Assumptions (\ref{eq:hyp}) and (\ref{eq:super}), we obtain the upper bound $$\E W_p^p(\tilde{\mu}_n, \mu)\leq C\left( h^p+\frac 1 {\sqrt n} \frac 1 {h^{\tilde{\beta}}}e^{1/{h^\beta\gamma}}\right).$$ Taking $h= (4/(\gamma\log(n)))^{1/\beta}$ gives the result.

We now prove Item 2.
From Proposition $\ref{prop:gen}$ and Assumptions (\ref{eq:hyp}) and (\ref{eq:ordinary}), we obtain
$$\E W_p^p(\tilde{\mu}_n, \mu)\leq \begin{cases}
 C \left(h^p+\frac 1 {\sqrt n}\frac 1{h^{\beta-1/2}}\right) & \mbox{ if }  \beta >\frac 1 2\\
C \left(h^p+\frac 1{\sqrt n}\sqrt{\log (\frac 1 h)}\right)& \mbox{ if }  \beta=\frac 1 2\\
 C\left( h^p+ \frac 1{\sqrt n}\right)& \mbox{ if }  \beta <\frac 1 2.
\end{cases}$$
Taking $h= n^{-\frac{1}{2p+(2\beta-1)_+}}$ gives the result.

\end{proof}


\section{Lower bound}\label{sec:lowerbound}

For some  $M > 0$ and $q \geq 1$, we denote by $\mathcal D(M,q)$ the set of measures $\mu$ on $\R$ such that $\int |x|^q d\mu(x)   \leq M$.
\begin{theo} \label{Theo:lowerWp}
Let $M>0$ and $q \geq 1$.
Assume that there exist $\beta>0$ and $c>0$, such that for every $\ell\in \{0,1,2\}$ and  every $t\in \R$,
\begin{equation}
\label{eq:assumplower}
|{\mu_\varepsilon^*}^{(\ell)}(t)|\leq c(1+|t|)^{-\beta}.
\end{equation}
Then, there exists a constant $C>0$ such that, for any estimator $\hat{\mu}$,
$$
\liminf_{n\to \infty} n^{\frac p{2\beta+1}}\sup_{\mu \in  \mathcal D(M,q)} \E W_p^p(\hat{\mu},\mu)>C.
$$
\end{theo}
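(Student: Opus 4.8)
The plan is to establish the lower bound via the standard two-point (or multiple-point) reduction scheme for minimax rates, adapted to the Wasserstein loss in the ordinary smooth deconvolution setting. First I would construct a family of candidate measures: start from a fixed reference measure $\mu_0 \in \mathcal D(M,q)$ (for instance a smooth compactly supported density, or the uniform on an interval), and perturb it by adding small oscillating bumps. Concretely, for a small parameter $\delta>0$ and an integer $N$ I would set $\mu_\theta = \mu_0 + \delta \sum_{j=1}^N \theta_j g((\cdot - x_j)/a)$ for $\theta \in \{0,1\}^N$ (or $\theta \in \{-1,1\}^N$ with $\mu_0$ as baseline), where $g$ is a fixed smooth function with $\int g = 0$, the $x_j$ are well-separated centers, and $a$ is the width of each bump. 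The parameters $\delta, a, N$ must be tuned so that: (i) each $\mu_\theta$ is a genuine probability measure in $\mathcal D(M,q)$; (ii) the Wasserstein distance $W_p(\mu_\theta, \mu_{\theta'})$ between two configurations differing in a single coordinate is bounded below by a quantity of order $\delta^{1/p} a^{1+1/p}$ (moving mass $\delta a$ across distance $\sim a$ costs $\sim (\delta a \cdot a^p)^{1/p}$ in $W_p^p$, hence $\sim \delta a^{p+1}$ in $W_p^p$); and (iii) the observed distributions $\mu_\theta \star \mu_\varepsilon$ are statistically close, in the sense that the $n$-fold product likelihoods have bounded Kullback–Leibler divergence or $\chi^2$ distance.

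The crux is controlling the statistical distance between the observation laws, and this is exactly where the ordinary smooth assumption \eqref{eq:assumplower} enters. The key computation is that the Fourier transform of the bump perturbation $\delta g((\cdot-x_j)/a)$ has modulus $\sim \delta a\, |g^*(a t)|$, which after convolution with $\mu_\varepsilon$ gets multiplied by $\mu_\varepsilon^*(t)$, of size $\sim (1+|t|)^{-\beta}$. Since $g^*$ is concentrated at frequencies $|t| \lesssim 1/a$, the convolved perturbation in Fourier has size roughly $\delta a \cdot a^\beta = \delta a^{1+\beta}$ in the relevant frequency band. A Plancherel / $L^2$ computation then gives that the $\chi^2$-distance (or squared Hellinger) between $\mu_\theta \star \mu_\varepsilon$ and $\mu_0 \star \mu_\varepsilon$ per observation is of order $N \delta^2 a^{2+2\beta} \cdot a^{-1} = N \delta^2 a^{1+2\beta}$ (the extra $a^{-1}$ coming from the $L^2$ normalization of a bump of width $a$), so that for $n$ observations one needs $n N \delta^2 a^{1+2\beta} = O(1)$. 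Here I would use either Fano's lemma with the full hypercube $\{0,1\}^N$ combined with the Varshamov–Gilbert bound (to get a subfamily with pairwise Hamming distances $\gtrsim N$), or Assouad's lemma which is tailor-made for this product structure and directly yields a lower bound of the form (number of coordinates) $\times$ (per-coordinate separation) $\times$ (per-coordinate testing affinity).

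Putting the pieces together: using $N$ bumps in a fixed bounded region forces $a \sim 1/N$, the per-coordinate $W_p^p$ separation is $\sim \delta a^{p+1} \sim \delta N^{-(p+1)}$, summing over the $\gtrsim N$ coordinates distinguished by Assouad gives a lower bound on $\E W_p^p$ of order $N \cdot \delta N^{-(p+1)} = \delta N^{-p}$, subject to the information constraint $n N \delta^2 N^{-(1+2\beta)} = n \delta^2 N^{-2\beta} \lesssim 1$, i.e. $\delta \lesssim N^{\beta}/\sqrt n$. To also keep each $\mu_\theta$ a probability measure we need $\delta \lesssim 1$ (the bump heights $\delta/a = \delta N$ times width $a=1/N$ contribute mass $\delta$, and positivity needs $\delta N \cdot \|g\|_\infty \lesssim$ the density of $\mu_0$, giving $\delta \lesssim 1/N$). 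Choosing $\delta \sim 1/N$ and then $N \sim n^{1/(2\beta+1)}$ to saturate the information constraint $n N^{-2} N^{-2\beta} \lesssim 1$ yields $\E W_p^p \gtrsim \delta N^{-p} \sim N^{-(p+1)} \sim n^{-(p+1)/(2\beta+1)}$ — I would need to recheck this exponent bookkeeping carefully, adjusting whether the bumps live in a fixed region ($a\sim 1/N$) or are allowed to have width decoupled from $N$, since the target rate is $n^{-p/(2\beta+1)}$; the correct construction likely uses a fixed single bump width comparable to $1$ and a number of bumps $N \sim n^{1/(2\beta+1)}$ spread over an interval of length $\sim N$, so that $a \sim 1$ is constant, the information cost per coordinate is $\sim n\delta^2$, and the $W_p^p$ separation per coordinate is $\sim \delta$. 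The main obstacle I anticipate is precisely this simultaneous tuning of $(\delta, a, N)$ so that the probability-measure and moment constraints $\mu_\theta \in \mathcal D(M,q)$ hold, the information-theoretic budget is respected, and the resulting $W_p^p$ lower bound matches $n^{-p/(2\beta+1)}$; a secondary technical point is obtaining a clean lower bound on $W_p$ for measures differing by a localized signed perturbation, for which I would either use the dual (Kantorovich–Rubinstein for $p=1$, or a test-function argument for general $p$ via $W_p \geq W_1$ and a direct transport-cost lower bound exploiting that mass must physically move across the bump's support).
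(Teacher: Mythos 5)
Your overall strategy is the same as the paper's (which follows Fan (1991) and Dedecker and Michel (2013)): perturb a fixed heavy-tailed reference density $f_{0,r}(t)=C_r(1+t^2)^{-r}$ by $b_n$ zero-mean bumps $H(b_n(t-t_{s,n}))$ of width $1/b_n$ placed on a fixed interval, with $H^*$ supported in $[1,2]$; use an Assouad-type randomization whose statistical requirement is a \emph{per-coordinate} bound $\chi^2(h_{\theta,s,0},h_{\theta,s,1})\le c/n$, proved exactly by your Plancherel computation using \eqref{eq:assumplower}; and convert a $W_1$ separation into $W_p^p$ via $W_p\ge W_1$ and Jensen. So the plan is sound; the genuine gap is that your parameter bookkeeping does not yield the claimed rate, and the repair you propose would fail.

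Two specific errors produce your spurious exponent $-(p+1)/(2\beta+1)$. First, positivity does not force $\delta\lesssim 1/N$: with bumps of height $\delta$ (your first normalization), zero integral, tails $|H(t)|\le c(1+t^2)^{-r_0}$ and spacing $\asymp a$, the sum of the $N$ bumps is uniformly $O(\delta)$, so a \emph{constant} amplitude $\delta\asymp 1$ is admissible, provided the reference density is bounded below on the fixed interval carrying the bumps and dominates the bump tails outside it --- precisely the role of conditions (A1)--(A3) and of $f_{0,r}$ in the paper. Second, the information budget in Assouad is per flipped coordinate, $n\delta^2 a^{1+2\beta}\lesssim 1$, not the total $nN\delta^2a^{1+2\beta}\lesssim 1$ (with Fano plus Varshamov--Gilbert you would compare the total divergence to $\log 2^N\asymp N$, which amounts to the same thing). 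With $\delta\asymp 1$, $a\asymp 1/b_n$ and $b_n\asymp n^{1/(2\beta+1)}$ these corrected constraints are saturated, the per-coordinate $W_1$ separation is $\asymp\delta a^2=b_n^{-2}$, summing over the $\asymp b_n$ coordinates gives $\E W_1\gtrsim b_n^{-1}$, and then $\sup_\mu\E W_p^p\ge(\sup_\mu\E W_1)^p\gtrsim n^{-p/(2\beta+1)}$ as required; the $W_1$ route also spares you the delicate claim that $W_p^p$ separations add up over coordinates when adjacent bumps are only at distance $a$ from one another. By contrast, your proposed fix --- width-one bumps spread over an interval of length $N\to\infty$ --- cannot be rescued: it is incompatible with the moment constraint $\int|x|^q\,d\mu\le M$ defining $\mathcal D(M,q)$ and with positivity against any fixed reference probability (whose density on an interval of length $N$ must be of order $1/N$ somewhere, which forces $\delta\lesssim 1/N$ and kills the rate), and since the perturbation then lives at frequencies $O(1)$, the ordinary-smooth attenuation $(1+|t|)^{-\beta}$ never acts, so no $\beta$-dependent rate could emerge from that construction.
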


\begin{rem} For $W_1$, this lower bound matches  the upper bound given in Theorem~\ref{theo:upperbounds} for $\beta \geq 1/2$. For $W_p$ ($p>1$), we conjecture that the upper bounds given by Theorem~\ref{theo:upperbounds}  are appropriate under the assumed tail conditions. Getting better rates of convergence for $W_p$ ($p>1$) is an open
question. From Section \ref{sec:withouterror}, it seems reasonable to think that better rates
can  be obtained when $\mu$ has an absolutely continuous component with respect to the Lebesgue
measure which is strictly positive on the support of $\mu$ (and also that this should be a necessary condition condition to reach the lower bound when $\beta>1/2$).
\end{rem}
We also give a lower bound for the cdf deconvolution problem with loss $L_p$
defined in (\ref{loss}).
\begin{theo} \label{Theo:lowercdfLp}
Let $M>0$ and $q \geq 1$.
Assume that there exist $\beta>0$ and $c>0$, such that \eqref{eq:assumplower}  is satisfied for every $\ell\in \{0,1,2\}$ and  every $t\in \R$. Then, there exists a constant $C>0$ such that, for any estimator $\hat{F}$ of $F$:
$$
\liminf_{n\to \infty} n^{\frac 1{2\beta+1}}\sup_{\mu \in  \mathcal D(M,q)} \E  \int_\R     |x|^{p-1}   |\hat F (x) - F(x) |dx   >C.
$$
\end{theo}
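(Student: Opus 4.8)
The plan is to prove the lower bound by Assouad's lemma, constructing $2^m$ competing probability measures in $\mathcal D(M,q)$ whose cumulative distribution functions are well separated for the loss $L_p(G,F)=\int_\R|x|^{p-1}|G-F|(x)\,dx$ of~(\ref{loss}), but whose images under convolution with $\mu_\varepsilon$ cannot be distinguished from $n$ observations.

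First I would fix a smooth probability density $f_0$ on $\R$, strictly positive, bounded below by a positive constant on a fixed compact interval $I_0$ lying away from the origin, with $f_0\in\mathcal D(M/2,q)$ and a polynomial tail chosen so that $q_0:=f_0\star\mu_\varepsilon$ is bounded below by $c(1+|x|)^{-s}$ for a suitable $s$. Next I would fix a smooth profile $\psi$ supported in $[0,1]$, not identically zero, with a large number $k$ of vanishing moments (so that $\widehat\psi(u)=O(|u|^k)$ near $0$, and $\Psi(v):=\int_{-\infty}^v\psi$ is a non-trivial bump supported in $[0,1]$). For a bandwidth $\delta=\delta_n\to0$ I would take $m=m_n\asymp1/\delta_n$ disjoint translates $x_1,\dots,x_m\in I_0$, write $H_j:=\psi((\cdot-x_j)/\delta)$, and set, for $\omega\in\{0,1\}^m$,
$$ f_\omega=f_0+a\sum_{j=1}^m\omega_j\,H_j,\qquad a=a_n\ \text{a small constant}. $$
Since $\int\psi=0$ each $f_\omega$ is a density; since the bumps are disjoint and $a\|\psi\|_\infty\le\tfrac12\inf_{I_0}f_0$ we have $f_\omega\ge0$; and the vanishing moments of $\psi$ make the perturbation of $\int|x|^q$ of order $a\,m\,\delta^{1+\lceil q\rceil}=O(a)$, so $f_\omega\in\mathcal D(M,q)$ once $a$ is small. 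The cdf's satisfy $F_\omega-F_{\omega'}=a\delta\sum_{j\in\Delta}\pm\,\Psi((\cdot-x_j)/\delta)$, where $\Delta$ is the symmetric difference of $\omega,\omega'$; because the bumps are disjoint and $|x|\asymp1$ on $I_0$ this gives $L_p(F_\omega,F_{\omega'})\asymp a\delta^2\,d_H(\omega,\omega')$, i.e.\ exactly the Hamming-type separation required by Assouad.

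For the information cost I would compare $Q_\omega:=\mu_\omega\star\mu_\varepsilon$ with $Q_{\omega^{\sim j}}$ for a single coordinate flip; their densities differ by $\pm a\,H_j\star\mu_\varepsilon$, so $\chi^2(Q_\omega\,\|\,Q_{\omega^{\sim j}})=a^2\int (H_j\star\mu_\varepsilon)^2/q_{\omega^{\sim j}}$. On a fixed neighbourhood $I_1$ of $I_0$, where $q_{\omega^{\sim j}}\ge q_0/2$ is bounded below by a constant, this is at most a constant times $a^2\|H_j\star\mu_\varepsilon\|_2^2$; by Parseval, $\widehat{H_j}(t)=\delta e^{itx_j}\widehat\psi(\delta t)$, the substitution $s=\delta t$ and $|\mu_\varepsilon^*(t)|\le c(1+|t|)^{-\beta}$ from~(\ref{eq:assumplower}), one gets $\|H_j\star\mu_\varepsilon\|_2^2=\frac{\delta}{2\pi}\int|\widehat\psi(s)|^2|\mu_\varepsilon^*(s/\delta)|^2\,ds=O(\delta^{2\beta+1})$, the vanishing moments of $\psi$ guaranteeing that the last integral is $O(\delta^{2\beta})$. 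Choosing $\delta_n=n^{-1/(2\beta+1)}$ and $a=a_n$ a small enough constant makes $n\,\chi^2(Q_\omega\|Q_{\omega^{\sim j}})$ as small as we wish, hence $\|Q_\omega^{\otimes n}-Q_{\omega^{\sim j}}^{\otimes n}\|_{TV}\le\tfrac12$. Assouad's lemma then gives $\inf_{\hat F}\max_\omega\E L_p(\hat F,F_\omega)\gtrsim m\,a\delta^2\asymp a\delta = a_n\,n^{-1/(2\beta+1)}$, and since each $F_\omega$ is the cdf of a measure in $\mathcal D(M,q)$ this yields $\liminf_n n^{1/(2\beta+1)}\sup_{\mu\in\mathcal D(M,q)}\E L_p(\hat F,F)>C$. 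The very same family, with separation now measured in $W_p^p$ — where re-arranging a bump of mass $\asymp a\delta$ within a window of width $\asymp\delta$ costs $\asymp a\delta^{1+p}$ per flipped coordinate — proves Theorem~\ref{Theo:lowerWp}.

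The step I expect to be the real obstacle is controlling the \emph{tail} part $\int_{I_1^c}(H_j\star\mu_\varepsilon)^2/q_0$, where $q_0$ is no longer bounded below by a constant: the crude estimate $|H_j\star\mu_\varepsilon(x)|\le\|\psi\|_\infty\,\mu_\varepsilon([x-x_j-\delta,x-x_j])$ only yields $O(\delta)$ for this integral, which would destroy the rate. Here one must exploit the $C^2$-regularity of $\mu_\varepsilon^*$ given by~(\ref{eq:assumplower}): differentiating $\widehat{H_j}\mu_\varepsilon^*$ twice and using Parseval, $\int x^4|H_j\star\mu_\varepsilon|^2\,dx=\frac1{2\pi}\|(\widehat{H_j}\mu_\varepsilon^*)''\|_2^2$, and the same Fourier computation as above (now with $|\mu_\varepsilon^{*(\ell)}(t)|\le c(1+|t|)^{-\beta}$ for $\ell\le2$) gives $\int(1+|x|)^4(H_j\star\mu_\varepsilon)^2\,dx=O(\delta^{2\beta+1})$; combined with $q_0(x)\gtrsim(1+|x|)^{-s}$ for $s\le4$, this bounds the tail part by $O(\delta^{2\beta+1})$ as well. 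This is precisely why the polynomial tail of $f_0$ has to be calibrated to the moment exponent $q$. The remaining points — correcting $\psi$ by a negligible term so that $\int\psi=0$ exactly, and passing from $\chi^2$ to the total-variation bound via $\log(1+t)\le t$ — are routine.
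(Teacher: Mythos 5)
Your overall strategy is the same as the paper's: a hypercube of densities obtained by adding small disjoint bumps to a fixed base density, a Fourier/Parseval bound on the $\chi^2$ divergence after convolution with $\mu_\varepsilon$ using \eqref{eq:assumplower}, and an Assouad-type randomization; your separation computation $L_p(F_\omega,F_{\omega'})\asymp a\delta^2 d_H(\omega,\omega')$ for the loss \eqref{loss} and the bulk bound $\|H_j\star\mu_\varepsilon\|_2^2=O(\delta^{2\beta+1})$ are correct, and the choice $\delta_n=n^{-1/(2\beta+1)}$ gives the right rate. The paper proves Theorem~\ref{Theo:lowerWp} with a band-limited perturbation ($H^*$ supported in $[1,2]$, polynomial decay in space, base density $f_{0,r}(t)=C_r(1+t^2)^{-r}$ with $r>(1+q)/2$) and defers the $\chi^2$ tail control to the computations of Fan (1991); Theorem~\ref{Theo:lowercdfLp} is then obtained by adapting that argument, so your compactly supported, vanishing-moment bumps placed away from the origin are a variant, not a different method.

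The genuine gap is exactly at the step you flag as the obstacle, and your proposed fix does not cover the theorem as stated. With \eqref{eq:assumplower} available only for $\ell\in\{0,1,2\}$, the weighted Parseval identity gives at most $\int(1+|x|)^4|H_j\star\mu_\varepsilon|^2dx=O(\delta^{2\beta+1})$ (two derivatives of $\widehat{H_j}\,\mu_\varepsilon^*$), so your tail bound requires $q_0=f_0\star\mu_\varepsilon\gtrsim(1+|x|)^{-s}$ with $s\le 4$, hence a base density whose tail is no lighter than $(1+|x|)^{-4}$. But membership of the hypotheses in $\mathcal D(M,q)$ forces $\int|x|^q f_0(x)dx\le M$, i.e.\ a tail strictly lighter than $|x|^{-(q+1)}$; since no lower bound on the tails of $\mu_\varepsilon$ is assumed (it may be compactly supported), the only guaranteed lower bound on $q_0$ is then of order $(1+|x|)^{-(q+1+\epsilon)}$. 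For $q\ge 3$ these two requirements are incompatible: the weight $4$ cannot absorb $1/q_0$, and the crude estimate you mention gives only $O(\delta)$ for the tail part, which destroys the rate whenever $\beta>1/2$ (interpolating between the two bounds, e.g.\ by splitting at $|x|\le T$ and optimizing $T$, still degrades the exponent). So the "calibration of the tail of $f_0$ to $q$" you invoke cannot be carried out for $q\ge 3$, and your proof as written only establishes the statement for $q<3$. The paper's construction avoids asking for more derivatives of $\mu_\varepsilon^*$: because $H^*$ is smooth and supported in $[1,2]$, $H$ can be taken with arbitrarily fast polynomial decay in space (and $\widehat H$ vanishes to infinite order at the origin), and the tail of the $\chi^2$ integral is then controlled through the spatial decay of $H$ and the base density as in the proofs of Theorems 4 and 5 of Fan (1991), rather than through extra smoothness of $\mu_\varepsilon^*$. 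To repair your argument for general $q\ge1$ you would need either such a rapidly decaying (e.g.\ band-limited) profile in place of the compactly supported $\psi$, or an additional moment/decay assumption that is not part of the theorem.
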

We give below the proof of Theorem~\ref{Theo:lowerWp} for the Wasserstein metric. The proof of Theorem~\ref{Theo:lowercdfLp}  is similar, it can be easily adapted from the proofs of Theorem~\ref{Theo:lowerWp} and of Theorem 3 in  \cite{DM13}.
\begin{proof}
Let  $M > 0$ and $q \geq 1$. The proof is similar to the proof of Theorem 3 in
\cite{DM13} and thus we only give here a sketch of the proof. We first define a finite family in $\mathcal D(M,q)$ using the densities
\begin{equation} \label{eq:f0}
 f_{0,r}(t) := C_r (1+ t^2)^{-r}
\end{equation}
with some $r >  (1+q)/2 $. Next, let $ b_n$ be the sequence
\begin{equation} \label{eq:defbn}
b_n := \Big[n^{\frac{1}{2 \beta + 1}}  \Big]\vee 1 \, ,
\end{equation}
where $[\cdot]$ is the integer part. For any $\theta \in
\{0,1\} ^{b_n}$, let
\begin{equation} \label{eq:ftheta}
f_{\theta} (t) = f_{0,r}(t) + C \sum _{s=1} ^{b_n} \theta_s  H \left(b_n ( t - t_{s,n}) \right), \quad t \in \R,
\end{equation}
where $C$ is a positive constant and $t_{s,n} = (s-1) / b_n$. The function $H$ is a bounded function whose integral on the line is $0$.
Moreover, we may choose a function $H$ such that (see for instance \cite{Fan91} or \cite{Fan93}):
\begin{description}
\item (A1) $\int_{-\infty}^{+\infty} H (t)\,  dt  = 0$ and $ \int_ 0 ^1  |H^{(-1)}(t)| \, dt >0 $,
\item (A2) $|H(t)| \leq c (1 + t^2) ^{-r_0}$ where $r_0 > \max(3/2,(1+q)/2)$,
\item (A3)  ${H^*}(z) = 0 $ outside $[1,2]$,
\end{description}
where $H^{(-1)}(t) : = \int_{-\infty} ^t H(u)  \, d u $ is a primitive of $H$. Note that by replacing $H$ by $H/C$ in the following, we finally can
take $C = 1$ in (\ref{eq:ftheta}).  Let $\mu_{\theta}$ be the measure of density $f_{\theta}$ with respect to  the Lebesgue measure.  Then we can find  some $M$
large enough such that for all $\theta \in \{0,1\} ^{b_n}$, $\mu_{\theta} \in \mathcal D(M,q)$. Moreover, under these assumptions the  first two
derivatives of $H^*$ are continuous and bounded.

For $\theta \in
\{0,1\}^{b_n}$ and $s \in \{1,\dots, b_n\}$, let us define the probability measures $\mu_{\theta,s,0}$ and $\mu_{\theta,s,1}$ with  densities
$$f_{\theta,s,0} := f_{(\theta_1, \dots,\theta_{s-1},0 ,\theta_{s+1}, \dots,\theta_{b_n})} \quad \textrm{and} \quad f_{\theta,s,1} := f_{(\theta_1,
\dots,\theta_{s-1},1,\theta_{s+1}, \dots,\theta_{b_n})}. $$
We also consider the densities $h_{\theta,s,u} = f _{\theta,s,u} \star \mu_{\varepsilon}$ for $u=0 $ or 1. Since $W_1$ is dominated by $W_p$, and using Jensen's inequality, it follows  that
\begin{multline} \label{WPW1}
 \sup _{\mu \in \mathcal D(M,q)} \E _{  (\mu \star \mu_{\varepsilon}) ^{\otimes n }}W_p ^p \left(\mu , \tilde \mu _n  \right)
\geq  \sup _{\mu \in \mathcal D(M,q)} \E _{  (\mu \star \mu_{\varepsilon}) ^{\otimes n }}W_1 ^p \left(\mu , \tilde \mu _n  \right)  \\
\geq  \left(\sup _{\mu \in \mathcal D(M,q)} \E _{  (\mu \star \mu_{\varepsilon}) ^{\otimes n }}W_1  \left(\mu , \tilde \mu _n  \right)  \right)^p \, .
\end{multline}
Using a standard randomization argument (see for the instance the proof of Theorem 3 in \cite{DM13} for the multivariate case), it can be shown that there exists a constant $C >0$ such that
\begin{equation} \label{resumBinf}
\sup _{\mu \in \mathcal D(M,q)} \E _{  (\mu \star \mu_{\varepsilon}) ^{\otimes n }}W_1 \left(\mu , \tilde \mu _n  \right)
  \geq    \frac C {b_n }  \int_{0} ^{1} \left|  H ^{(-1)}  (u)  \right| \, d  u \,
\end{equation}
as soon as there exists a constant $c >0$ such that, for any $\theta \in \{0,1\}^{b_n}$,
\begin{equation}  \label{eq:chi2h}
\chi ^2 \left(  h_{\theta,s,0}  \, , \, h_{ \theta,s,1} \right) \leq \frac c n \, ,
\end{equation}
where the $\chi ^2$
distance
between two densities $h_1$ and $h_2$ on $\R$ is defined by
$$ \chi^2(h_1,h_2)  = \int \frac{\left\{(h_1(x) -  h_2 (x)\right\}^2 }{h_1(x)} d x.$$
If (\ref{eq:chi2h}) is satisfied, we take $b_n$ as  in (\ref{eq:defbn}) and the theorem is thus proved   according to (\ref{WPW1}), (\ref{resumBinf}) and (A1).

It remains to prove (\ref{eq:chi2h}). Using (A2), we can find a constant $C>0$ such that for any $t \in \R$ and any $s \in
\{1,\dots,b_n\}$,
 \begin{align}
\chi ^2 \left(  h_{\theta,s,0} \, , \, h_{ \theta,s,1} \right)
&\leq C  b_n^{-1}   \int  \frac{ \left\{ \int  H
(v-y) \,   \mu_{\varepsilon} (d y/b_n)  \right\}^2} {  f_{0,r} \star \mu_\varepsilon (v/b_n)} d v . \label{reducekhi2}
\end{align}
The right side of (\ref{reducekhi2}) is typically the kind of  $\chi ^2$ divergence that is upper bounded in the proofs of Theorems~4 and 5 in
\cite{Fan91} for computing pointwise rates of convergence: under Assumption (\ref{eq:assumplower}), it gives that there exists a constant $C$ such
that
$$ \int  \frac{ \left\{ \int H
(v-y)  \,  \mu_{\varepsilon} (d y/b_n)   \right\}^2} {  f_{0,r} \star \mu_\varepsilon (v/b_n)} d v \leq C b_n^{-2 \beta} $$
and (\ref{eq:chi2h}) is proved.
\end{proof}

\section{Proof of Proposition \ref{prop:gen}}
\label{sec:proof}

Throughout, $C$ will denote a positive constant depending on $p$ which may change from line to line.

We start from the basic  inequality (\ref{eq:start}).
Inequality (\ref{eq:bias}) yields the bias term $$h^p2^{p-1}\int|x|^{p}k(x)dx \, , $$
and it remains to control the term $\E W^p_p(\tilde{\mu}_n,\mu\star K_h)$.

By (\ref{eq:deb}),  we have
\begin{align}\E W^p_p(\tilde{\mu}_n,\mu\star K_h)&\leq C\int|x|^{p-1}\E|\hat{F}_n-\E[\hat{F}_n]|(x)dx+\rho\nonumber\\
&\leq C\int|x|^{p-1}\sqrt{\var(\hat{F}_n)(x)}dx+\rho\label{eq:deb1}
.
\end{align}

Now, let $\phi$ denote a  symmetric function, $\lceil p \rceil$+1 times continuously differentiable, equal to 1 on the interval $[-1,1]$ and to  0 outside   $[-2,2]$. Our preliminary estimator $\hat{F_n}$ may be written
\begin{align*}\hat{F_n}(t)&=\frac 1 {nh}\int_{-\infty}^t\sum_{k=1}^n\tilde{k}_h\left(\frac{u-Y_k}{h}\right)du\\
&=\frac 1 n \sum_{k=1}^n G_{1,h}\left( \frac{t-Y_k}{h} \right)+\frac 1 n \sum_{k=1}^n G_{2,h}\left (\frac{t-Y_k}{h} \right)\\&
:=\hat{F}_{1,n}+\hat{F}_{2,n},
\end{align*}
where $$G_{1,h}(x)=\int_{-\infty}^x\tilde{k}_{1,h}(u)du \quad \mbox{ and }
\quad G_{2,h}(x)=\int_{-\infty}^x\tilde{k}_{2,h}(u)du.$$
Here,
$$
\tilde{k}_{1,h}(u)=\frac 1 {2\pi}\int \frac{e^{itu}k^*(t)\phi(t/h)}{\mu_\e^*(-t/h)}dt, \quad  \tilde{k}_{2,h}(u)= \frac 1 {2\pi}\int \frac{e^{itu}k^*(t)(1-\phi(t/h))}{\mu_\e^*(-t/h)}dt.
$$

%

From (\ref{eq:deb1}), we infer that
\begin{equation}\label{eq:decomp}
\E W_p^p(\tilde{\mu}_n,\mu\star K_h)\leq C (I+ J)+\rho,
\end{equation}
where
$$I=\int|x|^{p-1}\sqrt{\var(\hat{F}_{1,n})(x)}dx \quad
\mbox{and}
\quad
J=\int|x|^{p-1}\sqrt{\var(\hat{F}_{2,n})(x)}dx.$$

To prove Proposition \ref{prop:gen}, we shall give some upper bounds for the terms $I$ and $J$.
\paragraph{Control of  $I$.} We first split the integral into two parts:
\begin{align*}I&=
\int_{- \infty}^0|x|^{p-1}\sqrt{\var(\hat{F}_{1,n})(x)}dx+
\int_{0}^\infty|x|^{p-1}\sqrt{\var(\hat{F}_{1,n})(x)}dx:=I^-+I^+.
\end{align*}
Now,
\begin{align*}
I^-&=\int_{- \infty}^0|x|^{p-1}\sqrt{\var(\hat{F}_{1,n})(x)}dx\\
&\leq
\frac C{\sqrt{n}}\int_{-\infty}^0|x|^{p-1}\sqrt{\E\left[ G_{1,h}\left(\frac{x-Y}{h}\right)\right]^2}dx\\&\leq
 \frac C{\sqrt{n}}\int_{-\infty}^0|x|^{p-1}\sqrt{\E\left[ \int\tilde{k}_{1,h}(u)\I_{\left\{u\leq\frac{x-Y}{h} \right\}}du\right]^2}dx.
\end{align*}
Then, letting $z=uh$ and
applying Cauchy-Schwarz's inequality
we obtain, for any $a\in ]0, 1[$,
\begin{align*}
I^-&\leq
\frac C{\sqrt{n}}\int_{-\infty}^0|x|^{p-1}\sqrt{\E\left[ \int\ \frac{\tilde{k}_{1,h}(z/h)}{h}\I_{\left\{Y+z\leq x \right\}}dz\right]^2}dx\\
&\leq  \frac {C}{\sqrt{n}}
\int_{-\infty}^0|x|^{p-1}\sqrt{ \E\left[\int (1+|z|^{1+a})\left(\frac{\tilde{k}_{1,h}(z/h)}{h}\right)^2  \I_{\left\{Y+z\leq x \right\}} dz\right]}\,dx \, .
\end{align*}
Noticing that $\I_{\{Y+z\leq x \}}\leq \I_{\{Y\leq \frac x2\}}+\I_{\{z\leq \frac x2\}}$, we obtain that $I^-\leq I^-_1+I^-_2$, where
\begin{align*}
I^-_1
&=  \frac C{\sqrt{n}}\int_{-\infty}^0|x|^{p-1}\sqrt{ \E\left[\int (1+|z|^{1+a})\left(\frac{\tilde{k}_{1,h}(z/h)}{h}\right)^2  \I_{\left\{Y\leq \frac x2 \right\}} dz\right]}
dx\\
I^-_2 &= \frac C{\sqrt{n}}\int_{-\infty}^0|x|^{p-1}
\sqrt{ \int (1+|z|^{1+a})\left(\frac{\tilde{k}_{1,h}(z/h)}{h}\right)^2   \I_{\left\{z\leq \frac x2 \right\}} dz} \
dx \, .
\end{align*}

To control the term $I^-_1$, note that
\begin{align*}
I^-_1&
\leq \frac {C}{\sqrt{n}}\sqrt{ \int (1+|z|^{2})\left(\frac{\tilde{k}_{1,h}(z/h)}{h}\right)^2    dz}\int_{-\infty}^0|x|^{p-1}\sqrt{{\mathbb P}\left(Y\leq \frac x 2\right)}dx .
\end{align*}
Here we shall use the following lemma.

\begin{lem}\label{lem:trivial}
For any nonnegative integer $k$ and any $h\leq 1$ we have
$$
\int |z|^{2k}\left(\frac{\tilde{k}_{1,h}(z/h)}{h}\right)^2  dz
\leq C\Big(\sup_{t \in [-2,2]}\sum_{\ell=0}^{k}|r_\e^{(\ell)}(t)| \Big)^2 \, .
$$
\end{lem}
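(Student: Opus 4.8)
The plan is to pass to the Fourier side and exploit the explicit formula for $\tilde k_{1,h}$. Recall that
$\tilde k_{1,h}(u)=\frac{1}{2\pi}\int e^{itu}k^*(t)\phi(t/h)r_\e(-t/h)\,dt$, where $r_\e=1/\mu_\e^*$. After the change of variable $z=uh$ one has $\tilde k_{1,h}(z/h)/h=\frac{1}{2\pi h}\int e^{i(z/h)t}k^*(t)\phi(t/h)r_\e(-t/h)\,dt$, and substituting $t=hs$ turns this into $\frac{1}{2\pi}\int e^{izs}k^*(hs)\phi(s)r_\e(-s)\,ds$. Thus $z\mapsto \tilde k_{1,h}(z/h)/h$ is (up to the $2\pi$) the inverse Fourier transform of the function $g_h(s):=k^*(hs)\phi(s)r_\e(-s)$, which is supported in $[-2,2]$ (because $\phi$ vanishes outside $[-2,2]$) and, since $k^*$ is $\lceil p\rceil$ times differentiable with Lipschitz top derivative, $\phi$ is $\lceil p\rceil+1$ times continuously differentiable, and $r_\e$ is assumed $m_0$ times continuously differentiable, is at least twice continuously differentiable with derivatives bounded on $[-2,2]$.

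Next I would invoke the Fourier isometry: $\int |z|^{2k}\,|\tilde k_{1,h}(z/h)/h|^2\,dz$ is, by Plancherel applied to $z^k$ times the inverse transform of $g_h$, equal (up to a constant depending only on $k$ and $\pi$) to $\int |g_h^{(k)}(s)|^2\,ds$. Since $g_h$ is supported in $[-2,2]$, this is bounded by $4\sup_{s\in[-2,2]}|g_h^{(k)}(s)|^2$. Now expand $g_h^{(k)}=(k^*(h\cdot)\,\phi\,r_\e(-\cdot))^{(k)}$ by the Leibniz rule: each term is a product of a derivative of $k^*(h\cdot)$ (which, because $|h|\le1$ and $k^*$ has bounded derivatives up to order $\lceil p\rceil\ge k$ on its support $[-1,1]$, is bounded by a constant independent of $h$), a derivative of $\phi$ (bounded, depending only on $\phi$), and a derivative $r_\e^{(\ell)}(-s)$ with $0\le \ell\le k$. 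Hence $\sup_{s\in[-2,2]}|g_h^{(k)}(s)|\le C\sum_{\ell=0}^{k}\sup_{t\in[-2,2]}|r_\e^{(\ell)}(t)|$, and squaring gives the claimed bound.

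The only mildly delicate point is bookkeeping: one must check that all the Fourier manipulations (differentiation under the integral sign, Plancherel) are legitimate, which is guaranteed here because $g_h$ is compactly supported and $C^k$ with $k\le m_0$, so both $g_h$ and all its relevant derivatives lie in $L^1\cap L^2$; and one must be slightly careful that differentiating $k^*(h\cdot)$ brings down powers of $h$, but since $h\le1$ these only help. I expect no real obstacle — the main work is simply organizing the Leibniz expansion and confirming that the derivative orders used ($k\le m_0$ for $r_\e$, $k\le\lceil p\rceil$ for $k^*$) are covered by the standing hypotheses, which they are since $m_0>p+\tfrac12$ forces $m_0\ge\lceil p\rceil$ and the lemma will only be applied with $k\le m_0$.
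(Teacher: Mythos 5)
Your proof follows essentially the same route as the paper's: after the substitution $t=hs$, Parseval--Plancherel converts the weight $|z|^{2k}$ into the $L^2$ norm of the $k$-th derivative of the compactly supported symbol $k^*(hs)\phi(s)r_\e(-s)$, and a Leibniz expansion on the support $[-2,2]$ yields the stated bound with the factors $h^j\le 1$ only helping. One bookkeeping correction: the lemma is applied with $k=m_0$, and $m_0$ can equal $\lceil p\rceil+1$ (e.g.\ when $p$ is an integer), so your claim that only derivatives of $k^*$ up to order $\lceil p\rceil$ are ever needed is not quite accurate; this edge case is exactly what the hypothesis that the $\lceil p\rceil$-th derivative of $k^*$ is Lipschitz is for, since it provides an almost-everywhere bounded derivative of order $\lceil p\rceil+1$, and the Plancherel--Leibniz argument goes through unchanged with this weak derivative.
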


\begin{proof} By definition of $\tilde{k}_{1,h}$,
$$
\int |z|^{2k}\left(\frac{\tilde{k}_{1,h}(z/h)}{h}\right)^2  dz\leq \frac 1 {4\pi^2}
\int |z|^{2k}\left|\int\frac{e^{iu z}k^*(uh)\phi(u)}{\mu_\e^*(-u)}du\right|^2
dz \, .
$$
Now, by Parseval-Plancherel's identity,
$$\int|z|^{2k}\left|\int \frac{e^{iuz}k^*(uh)\phi(u)}{\mu_\e^*(-u)}du\right|^2dz=2\pi
\int \left|
\left(\frac{k^*(th)\phi(t)}{{\mu_\e^*}(-t)}\right)^{(k)}\right|^2dt.$$
It can be checked that, for $h\leq 1$,
$$
\left|
\left(\frac{k^*(th)\phi(t)}{{\mu_\e^*}(-t)}\right)^{(k)}\right| \leq C\sum_{\ell=0}^{k}|r_\e^{(\ell)}(t)|\I_{[-2,2]}(t),
$$
which concludes the proof of the Lemma.
\end{proof}
Applying Lemma \ref{lem:trivial} with $k=1$, we obtain that
\begin{align}\label{eq:I1}
I^-_1&
\leq \frac {C}{\sqrt{n}}\Big( \sup_{t \in [-2,2]}\sum_{\ell=0}^{1}|r_\e^{(\ell)}(t)|    \Big)\int_{-\infty}^0|x|^{p-1}\sqrt{{\mathbb P}\left(Y\leq \frac x 2\right)}dx .
\end{align}

We now control the term $I_2^-$. Let $b\in]0,1[$. Applying Cauchy-Schwarz's inequality
\begin{align*}
I^-_2&\leq
\frac {C}{\sqrt{n}}
\sqrt{\int_{-\infty}^0|x|^{2p-2}(1+|x|^{1+b}) \int_{-\infty}^\frac x2(1+|z|^{1+a})\left(\frac{\tilde{k}_{1,h}(z/h)}{h}\right)^2dz\,dx}.
\end{align*}
Consequently, by Fubini's Theorem
\begin{align*}
I_2^-&\leq \frac {C}{\sqrt{n}}
\sqrt{\int_{-\infty}^0 (1+|z|^{1+a})\left(\frac{\tilde{k}_{1,h}(z/h)}{h}\right)^2
\int_{2z}^0|x|^{2p-2}(1+|x|^{1+b})dx\,dz}
 \\& \leq \frac {C}{\sqrt{n}}
\sqrt{\int (1+|z|^{2p+1+a+b})\left(\frac{\tilde{k}_{1,h}(z/h)}{h}\right)^2
dz}
\end{align*}
Let $m_0$ be the least integer strictly greater than $p+1/2$.
Taking $a$ and $b$ close enough to $0$, it follows that
$$
I_2^- \leq \frac {C}{\sqrt{n}}
\sqrt{\int (1+|z|^{2m_0})\left(\frac{\tilde{k}_{1,h}(z/h)}{h}\right)^2 dz }
$$
Applying Lemma \ref{lem:trivial} with $k=m_0$, it follows that
\begin{equation}\label{eq:I2}
I_2^- \leq \frac {C}{\sqrt{n}} \Big(\sup_{t \in [-2,2]}\sum_{\ell=0}^{m_0}|r_\e^{(\ell)}(t)| \Big)\, .
\end{equation}
In the same way, we have
\begin{align*}
I^+&=\int_{0}^\infty|x|^{p-1}\sqrt{\var(1-\hat{F}_{1,n})(x)}dx\\
&\leq
\frac C{\sqrt{n}}\int_0^\infty|x|^{p-1}\sqrt{\E\left[1- G_{1,h}\left(\frac{x-Y}{h}\right)\right]^2}dx\\&\leq
 \frac C{\sqrt{n}}\int_0^\infty|x|^{p-1}\sqrt{\E\left[ \int\tilde{k}_{1,h}(u)\I_{\left\{u\geq\frac{x-Y}{h} \right\}}du \right]^2}dx.
\end{align*}
Using the same arguments as for $I^-$, we  obtain,
\begin{multline}\label{eq:Iplus}
I^+\leq \frac {C}{\sqrt{n}}\Big(\sup_{t \in [-2,2]}\sum_{\ell=0}^{1}|r_\e^{(\ell)}(t)| \Big)\int_0^\infty |x|^{p-1}\sqrt{P\left(Y\geq \frac x 2\right)}dx  + \frac {C}{\sqrt{n}} \Big(\sup_{t \in [-2,2]}\sum_{\ell=0}^{m_0}|r_\e^{(\ell)}(t)| \Big) \, .
\end{multline}
Consequently, gathering (\ref{eq:I1}), (\ref{eq:I2}) and (\ref{eq:Iplus}) we obtain that
\begin{multline}\label{eq:I}
I\leq \frac {C}{\sqrt{n}}\Big(\sup_{t \in [-2,2]}\sum_{\ell=0}^{1}|r_\e^{(\ell)}(t)| \Big)\int_0^\infty |x|^{p-1}\sqrt{P\left(|Y|\geq  x \right)}dx + \frac {C}{\sqrt{n}} \Big(\sup_{t \in [-2,2]}\sum_{\ell=0}^{m_0}|r_\e^{(\ell)}(t)| \Big) \, .
\end{multline}

\paragraph{ Control of  $J$.}
Let $a \in ]0,1/2[$. By definition of the term $J$, and applying Cauchy-Schwarz's inequality,
\begin{align*}J
&\leq \frac C{\sqrt{n}}\int|x|^{p-1}\sqrt{\E\left[ G_{2,h}\left(\frac{x-Y}{h}\right)\right]^2}dx\\
&\leq \frac C{\sqrt{n}}\sqrt{\int|x|^{2p-2}(1+|x|^{1+a})\E \left[ G_{2,h}\left(\frac{x-Y}{h}\right)\right]^2dx}.
\end{align*}
Let us write $$G_{2,h}(x)=\lim_{T\to -\infty}\int_T^x\tilde{k}_{2,h}(u)du=
\lim_{T\to -\infty}\int \I_{[T,x]}(u)\tilde{k}_{2,h}(u)du.$$
Using  Parseval-Plancherel's identity, we get
\begin{align*}G_{2,h}(x)&=\lim_{T\to -\infty}\frac 1{2\pi}\int \overline{\I^*_{[T,x]}}(u)\tilde{k}^*_{2,h}(u)du
\\&=-\frac{1}{2\pi i}\left[\int \frac {e^{-itx}}{t}\frac {k^*(t)(1-\phi(t/h))}{\mu_\e^*(t/h)}dt-\lim_{T\to -\infty}\int \frac {e^{-itT}}{t}\frac {k^*(t)(1-\phi(t/h))}{\mu_\e^*(t/h)}dt\right]\end{align*}
Since the function $t\mapsto \dfrac {k^*(t)(1-\phi(t/h))}{t\mu_\e^*(t/h)} $ is integrable, the Riemann-Lebesgue Lemma ensures that $$ \lim_{T\to -\infty}\int \frac {e^{-itT}k^*(t)(1-\phi(t/h))}{t\mu_\e^*(t/h)}dt=0,$$ so that  $$G_{2,h}(x)=-\frac{1}{2\pi i}\int  \frac{e^{-itx}k^*(t)(1-\phi(t/h))}{t\mu_\e^*(t/h)}dt.
$$
Consequently,
\begin{align*}J\leq\frac C{\sqrt{n}}\sqrt{\E\left(\int(1+|x|^{2p-1+a})\left( -\frac 1{2\pi i}\int \frac{e^{-it\frac{x-Y}{h}}k^*(t)(1-\phi(t/h))}{t\mu_\e^*(t/h)}dt\right)^2dx\right)} .
\end{align*}
Setting $u=t/h$ and using the fact that $|x|^{q} \leq 2^{q-1}|x-Y|^{q} + 2^{q-1}|Y|^{q}$
for any $q\geq 1$, we obtain that
\begin{align*}
J&\leq\frac C{\sqrt{n}}\left[{\mathbb E}\left ( \int |x-Y|^{2p-1+a}\left(-\frac 1{2\pi i}\int \frac{e^{-iu(x-Y)}k^*(uh)(1-\phi(u))}{u\mu_\e^*(u)}du\right)^2dx
\right )
\right.
\\&
\left.\quad   +\E \left ((1+ |Y|^{2p-\frac 1 2 })\int  \left(-\frac 1{2\pi i}\int \frac{e^{-iu(x-Y)}k^*(uh)(1-\phi(u))}{u\mu_\e^*(u)}du\right)^2dx
\right )
\right]^{1/2}.
\end{align*}
Thus,
\begin{align*}
J&\leq\frac C{\sqrt{n}}\left[\int (1+ |x|^{2p-1 + a}) \left(-\frac 1{2\pi i}\int \frac{e^{-iux}k^*(uh)(1-\phi(u))}{u\mu_\e^*(u)}du\right)^2dx
\right.
\\&
\left.\quad \quad \quad \quad \quad +\E |Y|^{2p- \frac 1 2 }\int  \left(-\frac 1{2\pi i}\int \frac{e^{-iux}k^*(uh)(1-\phi(u))}{u\mu_\e^*(u)}du\right)^2dx
\right]^{1/2}.
\end{align*}
Let $m_1$ be the least integer strictly greater than $p- \frac 1 2$.
Taking $a$ close enough to zero,  it follows that
\begin{align*}
J&\leq\frac C{\sqrt{n}}\left[\int(1+|x|^{2 m_1})\left|
\frac 1{2\pi }\int \frac{e^{-iux}k^*(uh)(1-\phi(u))}{u\mu_\e^*(u)}du\right|^2dx
\right.
\\&
\left.\quad \quad \quad \quad \quad +\E |Y|^{2p- \frac 1 2 }\int  \left| \frac 1{2\pi }\int \frac{e^{-iux}k^*(uh)(1-\phi(u))}{u\mu_\e^*(u)}du \right|^2dx
\right]^{1/2}.
\end{align*}
 By Parseval-Plancherel's identity, $$\int\left|\int \frac{e^{-iux}k^*(uh)(1-\phi(u))}{
u\mu_\e^*(u)}du\right|^2dx=2\pi
\int
\left|\frac{k^*(th)(1-\phi(t))}{t\mu_\e^*(-t)}\right|^2dt,$$ and
\begin{multline*}
\int|x|^{{2 m_1 }}\left|\int \frac{e^{-iux}k^*(uh)(1-\phi(u))}{
u\mu_\e^*(u)}du\right|^2dx
=2\pi
\int
\left|\left(\frac{k^*(th)(1-\phi(t))}{t\mu_\e^*(-t)}\right)^{({m_1})}\right|^2dt.
\end{multline*}
Now, for $h\leq 1$, \begin{align*}\left|\left(\frac{k^*(th)(1-\phi(t))}{t\mu_\e^*(-t)}\right)^{(m_1)}\right|&\leq
C\sum_{j=0}^{m_1}\sum_{\ell=0}^j \frac{|r_\e^{(\ell)}(-t)|}{|t|^{j-\ell+1}}\I_{[-1,1]^c}(t)
\\&\leq C\sum_{\ell=0}^{m_1}
\frac{|r_\e^{(\ell)}(-t)|}{|t|}\I_{[-1,1]^c}(t).
\end{align*}

Finally,
\begin{multline} \label{J}
J\leq \frac C{\sqrt{n}} \left [\E |Y|^{2p- \frac 1 2 }\int_{-1/h}^{1/h}
\frac{|r_\e(x)|^2}{|x|^2}\I_{[-1,1]^c}(x)dx + \sum_{\ell=0}^{m_1}\int_{-1/h}^{1/h}
\frac{|r_\e^{(\ell)}(x)|^2}{|x|^2}\I_{[-1,1]^c}(x)dx \right ]^{1/2}.
\end{multline}

Starting from (\ref{eq:start}) and gathering the upper bounds
(\ref{eq:bias}), (\ref{eq:decomp}), (\ref{eq:I}) and (\ref{J}),
the proof of Proposition \ref{prop:gen} is complete.

\section{Numerical experiments}\label{sec:simu}

This section is devoted to the implementation of the deconvolution estimators. We continue the experiments of \cite{CCDM11} about Wasserstein deconvolution
in the ordinary smooth case. In particular, we study the $W_1$ and $W_2$ univariate deconvolution problems and we compare our numerical
results with the upper and lower bounds given in the previous sections. We also apply our procedure to the deconvolution of the
uniform measure on the Cantor set. The deconvolution method is implemented in R.

\subsection{Implementation of the deconvolution estimators}
\label{sec:firststep}
For all the experiments we  use the kernel
$$
 k(x)=\frac{3}{16 \pi}
  \left(\frac{8\sin(x/8)}{x}\right)^4
$$
which corresponds to the kernel given by (\ref{eq:kernelp}) with $p=2$ and a Fourier support over $[-1/2,1/2]$. Computing the deconvolution estimators
requires to evaluate many times the function
$$
 \tilde{k}_h : x \mapsto \frac 1 {2\pi}\int\frac {e^{iux }k^*(u)}{\mu_\e^*(-u/h)}du
 $$
which is  the Fourier transform of
$$ \psi_h :  u \mapsto  \frac 1 {2 \pi}  \frac {k^*(u)}{\mu_\e^*(-u/h)}.$$
The Fourier decomposition of $ \psi_h$  is given by
$ \psi_h(u) =   \sum_{k \in \mathbb Z } a_{k,h} e^{ 2i \pi k u } $
where
$
a_{k,h}  =  \int_{-1/2}^{1/2}  \psi_h(u) e^{ -2 i \pi k u } du  .
$
In this section we consider symmetric distributions for $\mu_\e$. Thus $k^*$  and $\mu_\e^*$ are  even functions, 
and the $ a_{k,h}$'s are real coefficients. 
Next,
\begin{eqnarray*}
 \tilde{k}_h (x) &=& \int_{-1/2}^{1/2} \psi_h(u) e^{  i xu } du \\
 & = & \sum_{k \in \mathbb Z } a_{k,h}  \int_{-1/2}^{1/2} e^{ i(2 \pi k  + x   ) u }  du \\
  & = &  \sum_{k \in \mathbb Z } a_{k,h} \, {\rm sinc} \left( \frac{2 \pi k +x}{2}\right).
\end{eqnarray*}
For large $N$, the coefficient $a_{k,h}$ can be approximated by the $k$-th coefficient of a discrete Fourier transform taken at
$\left(\psi_h(0),\psi_h(1/N),\dots, \psi_h(1-1/N)\right)$, denoted $\hat a _{k,h,N}$ in the sequel. Of course  we use  the Fast Fourier Transform
algorithm to compute these quantities.
For some large $K$, we evaluate $ \tilde{k}_h$ at some point $x$ by
\begin{equation}
 \label{eq:ktildehat}
  \hat{\tilde{k}}_h(x) \approx \sum_{|k| \leq K }  \hat a _{k,h,N} \, {\rm sinc} \left( \frac{2 \pi k +x}{2}\right).
\end{equation}
For intensive simulation, it may be relevant to preliminary  compute  $\hat{\tilde{k}}_h$ on a grid of high  resolution rather than calling this function
each time.

We first define a discrete approximation of the function
\begin{equation*} \label{eq:naive-estim-mu}
\hat{\mu}_{n,h}: u \mapsto \frac 1 {nh} \sum_{k=1}^n  \tilde{k}_h \left(\frac{u-Y_k}{h}\right) .
\end{equation*}
Let $\mathcal  P= \{ t_1 < \dots < t_q\} $  be a finite regular grid of points in $\R$
with resolution $\eta$. A discrete approximation $  \hat \mu^d_{n,h}  $ of  $\hat \mu_{n,h}$ is defined on $\mathcal P$ by
$$ \hat \mu^d_{n,h}=  \eta \sum_{j=1}^q \hat{\mu}_{n,h}(t_j) \delta_{t_j} ,$$
where $\delta_{x}$ is the Dirac distribution at $x$. Since $\hat{\mu}_{n,h}(t_j) $ can be negative, the first method for estimating $\mu$ consists in taking the positive part of $\hat{\mu}_{n,h}^d$ :
$$\hat \mu^{\mbox{\tiny naive}}_{n,h}   :=  \frac{ \sum_{j =1}^q    \left( \hat{\mu}_{n,h}^d (t_j)  \right)^+ \delta_{t_j }}{ \sum_{j =1}^q    \left(
\hat{\mu}_{n,h}^d (t_j) \right)^+ } . $$
This first estimator is called the ``naive'' deconvolution estimator henceforth. Note
that it was studied in \cite{CCDM11} and \cite{DM13}. For implementing the alternative estimator $\tilde \mu_{n,h}$ proposed in this paper, we first need to find some probability distribution $\tilde F_{n,h}$ on $\R$ such that
\begin{multline}
\int_\R|x|^{p-1}|\hat{F}_{n,h}-\tilde{F}_{n,h}|(x)dx  \\
\approx  \inf\left\{ \int_\R|x|^{p-1}|\hat{F}_{n,h}-G|(x)dx \, |,  \ \, G \textrm{ probability distribution on } \R \right\}.
\label{eq:defest1}
\end{multline}
In practice, this corresponds to finding a distribution function close to the step function
$$\hat F^d_{n,h} :t \mapsto    \sum_{j =1}^q      \hat{\mu}_{n,h}^d (t_j)   \I_{\{ t_j \leq t \}} . $$
Since $\hat F^d_{n,h}$ may take its values outside $[0,1]$, we can also look for a distribution function
close to $t \mapsto \hat F^d_{n,h}(t) \I_{\hat F^d_{n,h} (t) \in [0,1]}$. In other terms, we  compute the isotone regression of $t \mapsto \hat
F^d_{n,h}(t) \I_{\hat F^d_{n,h} (t) \in [0,1]}$ with weights $t_j^{p-1}$:
$$\hat F^{\mbox{\tiny isot},p}_{n,h} :=  \mbox{argmin} \left\{
  \sum_{j=1}^q  |t_j|^{p-1}
 \left|
G(t_j) -  \hat F^d_{n,h}(t_j) \I_{\hat F^d_{n,h} (t_j) \in [0,1]}
 \right|^p
   \,  ,  \textrm{ $G$ non-decreasing}
\right\} . $$
We compute $\hat F^{\mbox{\tiny isot},p}_{n,h}$ thanks to the function {\ttfamily gpava}  from the R package {\ttfamily isotonic} \citep{mair2009isotone}. The measure $\mu$ is finally estimated by the absolutely continuous  measure  $ \hat \mu^{\mbox{\tiny isot},p}_{n,h}$  whose distribution function is $\hat F^{\mbox{\tiny isot},p}_{n,h}$. We call this estimator the isotone deconvolution estimator for the metric $W_p$.

The construction of $\hat \mu^{\mbox{\tiny isot},p}_{n,h}$ depends on many parameters, for instance $K$, $h$, $N$ and $\eta$. Tuning all these parameters is a tricky issue.
For this paper we only tune these quantity by hand. The bandwidth choice is discussed in Section~\ref{sub:choiceh}. Note that one crucial point is the length $N$ of the vector we use for
computing the the $a _{k,h,N}$'s with the FFT. For ordinary smooth distributions, we observe that $\tilde k_h$ decreases slowly for small
 $\beta$ for the range of bandwidths $h$ giving minimum Wasserstein risks. Consequently, a small $\beta$ requires many terms in the expansion (\ref{eq:ktildehat}),  and hence a large $N$. For $\beta$ smaller than 0.5, it was necessary to take $N \approx 10^4$.

\subsection{Computation of Wasserstein risks for simulated experiments}
\label{sec:simrates}
\begin{figure}
\begin{center}
	\includegraphics[scale=1.2]{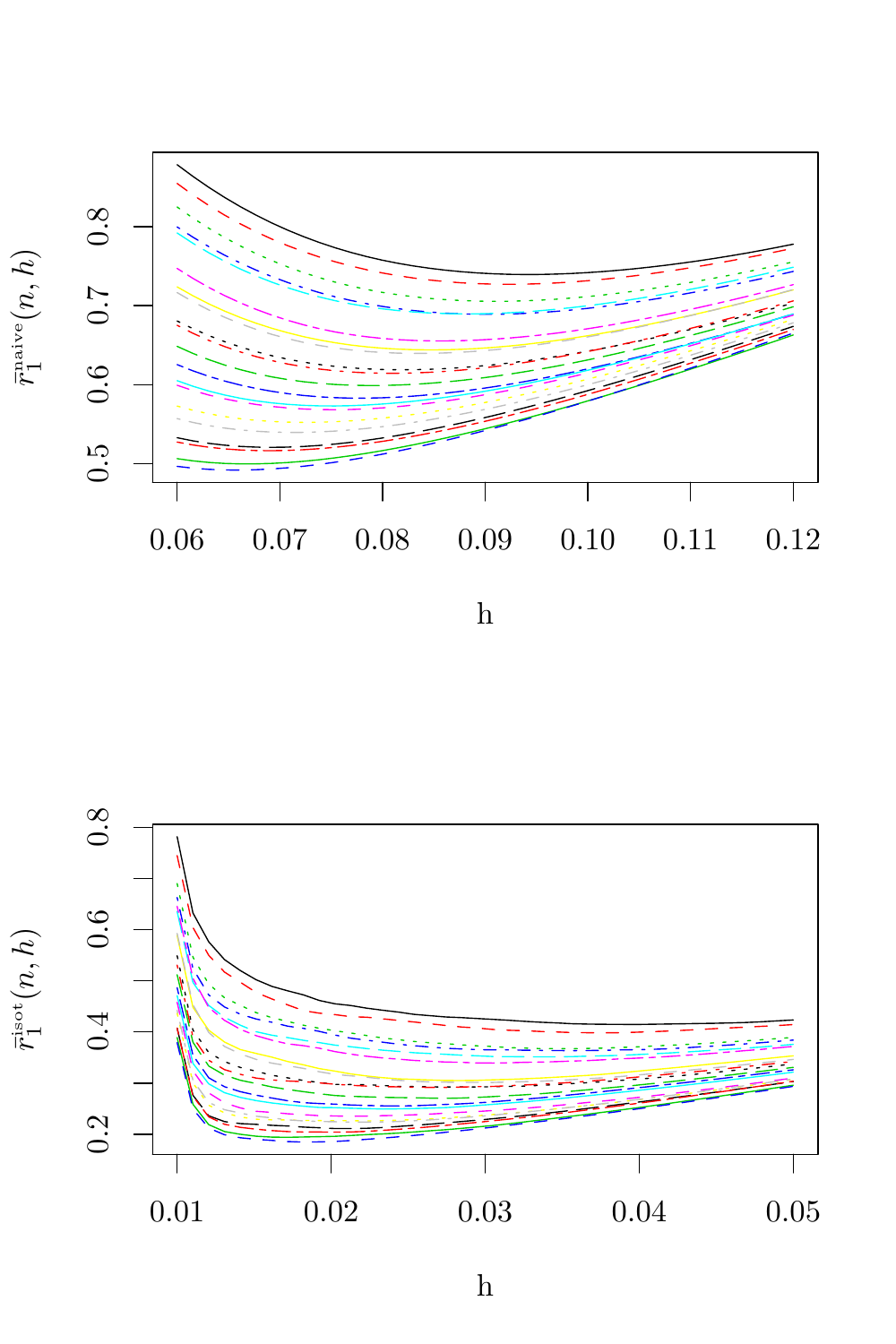}
\caption{Estimated Wasserstein risks for the Dirac experiment. The noise distribution is the symmetrized  Gamma distribution with $\beta = 2$. The twenty curves correspond to samples of length $n$ taken between 100 and 2000.}
\label{fig:Gamma2dot0}
\end{center}
\end{figure}

For fixed distributions $\mu$ and $\mu_\varepsilon$,  we simulate $Y_1,\dots,Y_n$ according to the convolution model (\ref{ModeConvo}). For a given bandwidth $h$ and $p \geq 1$, we can compute  $W_p^p ( \hat \mu^{\mbox{\tiny naive}}_n , \mu) $ and  $W_p ^p( \hat \mu^{\mbox{\tiny isot},p}_{n,h} ,\mu) $ using the quantile
 functions of the measures, thanks to the relation (\ref{WpFinv}).
The Wasserstein risks   $ \mathcal R^{\mbox{\tiny naive}} (n,h) := \E W_p^p ( \hat \mu^{\mbox{\tiny naive}}_{n,h} , \mu)$ and  $  \mathcal R^{\mbox{\tiny isot}} (n,h) := \E W_p^p ( \hat \mu^{\mbox{\tiny isot},p}_{n,h} , \mu) $ can be estimated by an elementary Monte Carlo method by repeating the simulation of the $Y_i$'s and averaging the Wasserstein distances. Let $ {\bar r }_p^ {\mbox{\tiny isot}} (n,h)$ and $ {\bar r} _p^{\mbox{\tiny naive}} (n,h)$ be the estimated risks obtained this way (see Figure \ref{fig:Gamma2dot0} for an illustration of such curves for the Dirac experiment). For each $n$, an approximation  of the minimal risks over the bandwidths is proposed by
$$ \bar r  ^{\mbox{\tiny isot}}_{p,*}(n) :=\mbox{min}_{h \in \mathcal H}  {\bar r }_p^ {\mbox{\tiny isot}} (n,h) $$
and
$$ \bar r^{\mbox{\tiny naive}}_{p,*}(n) :=\mbox{min}_{h \in \mathcal H} {\bar r} _p^{\mbox{\tiny naive}} (n,h)$$
where $\mathcal H$ is a grid of bandwidth values.

\subsection{Estimation of the rates of convergence} \label{sub:RatesEstim}

In this experiment we study the rates of convergence of the  estimators for the deconvolution of three distributions:
\begin{itemize}
\item Dirac distribution at 0,
\item Uniform distribution on $[-0.5,0.5]$,
\item Mixture of the Dirac distribution at 0 and the uniform distribution on $[-0.5,0]$.
\end{itemize}
We take  for $\mu_{\varepsilon}$ the ordinary smooth distributions summarized in Table \ref{tab:distDirac}. Recall that the coefficient $\beta$ of a symmetrized Gamma distribution is twice the shape parameter of the distribution.
\begin{table}
\begin{center}
\begin{tabular}{|c|c|c|}
\hline
Distribution   &  $\mu_{\varepsilon}^*$  & $\beta$ \\
\hline
Symmetrized Gamma & $ t \mapsto (1+   t^2  )^{-\beta/2} $&  0.3, 0.5, 1.2, 2 ,3, 4   \\
Laplace  & $ t \mapsto (1+  t^2)^{-1} $& 2 \\
Symmetrized $\chi ^2$ &  $t \mapsto (1+  4  t^2)^{(-1/2)}$ & 1\\
\hline
\end{tabular}
\end{center}
\caption{Ordinary smooth distributions used for the error.}
\label{tab:distDirac}
\end{table}
For each error distribution and for $n$ chosen between 100 and 2000, we simulate 200 times a sample of length $n$ from which we compute  the estimated minimal risks $ \bar r^{\mbox{\tiny isot}}_{p,*}(n)$ and $ \bar r^{\mbox{\tiny naive}}_{p,*}(n) $. We study the Wasserstein risks $W_1$ and $W_2$. We obtain some estimation of the exponent of the rate of convergence for each deconvolution problem by computing the linear regression of $\log \bar r_{p,*}(n)$  by $\log n$. See Figure~\ref{fig:Gamma2dot0LogLog} for an illustration and Figures~\ref{fig: LogLogW1} and \ref{fig: LogLogW2} at the end of the paper for the complete outputs of the Dirac case. A linear trend can be observed in all cases.
As expected, the risks are smaller for the isotone estimators than for the naive ones.
\begin{figure}
\begin{center}
	\includegraphics[scale=1]{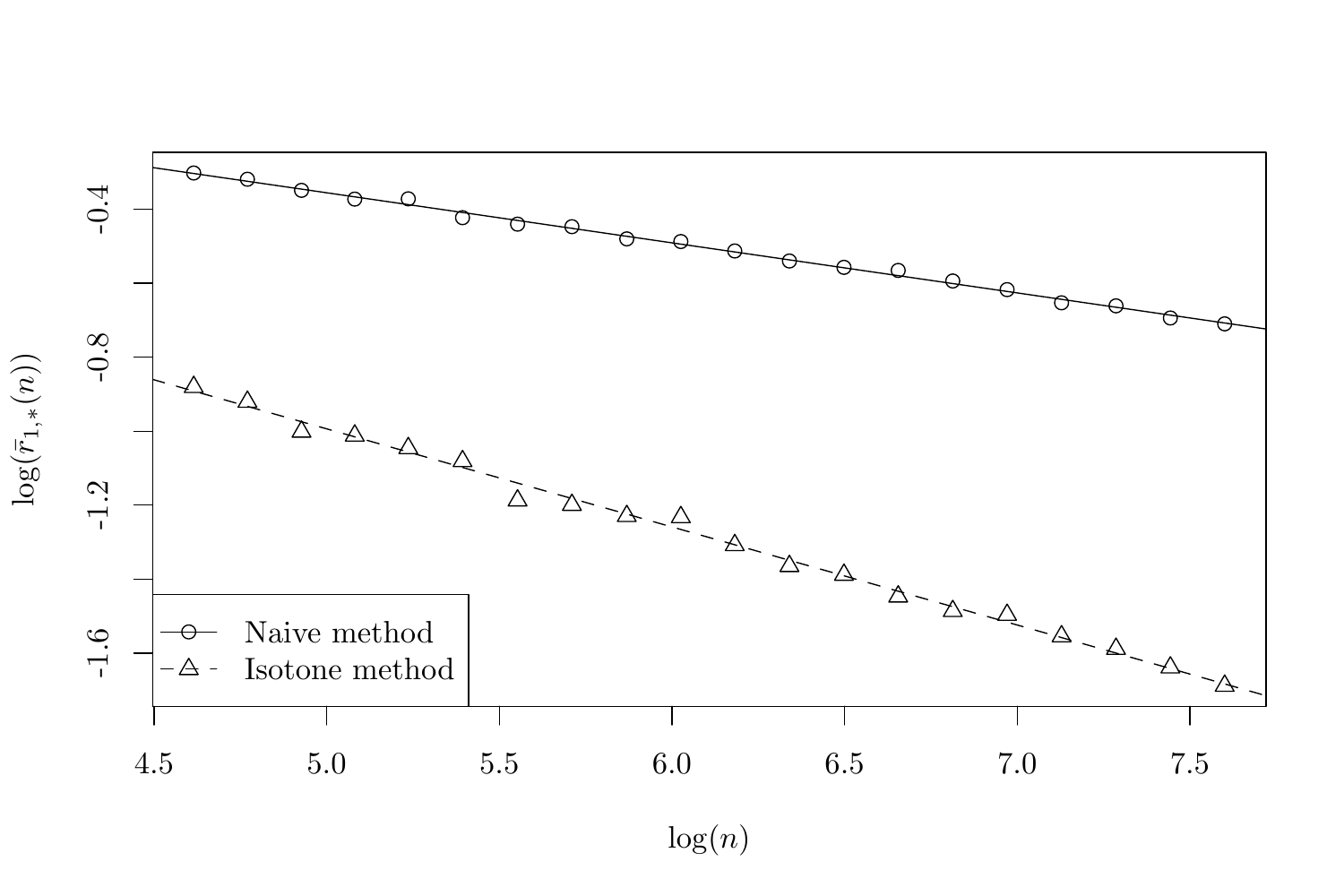}
\caption{Estimated rates of convergence to zero of the $W_1$-risk for the naive method and the isotone method for $\mu$ being a Dirac distribution at 0. The noise distribution is the symmetrized Gamma distribution  with $\beta = 2$.}
\label{fig:Gamma2dot0LogLog}
\end{center}
\end{figure}

The estimated exponents of the convergences rates are plotted in Figure \ref{fig:diracRates} as functions of $\beta$. These estimated rates can be compared with the upper and lower  bounds obtained in the paper.  Of course the rates of convergence of the isotone estimator have no reason to match exactly  the lower bounds. However it can be checked that the estimated rates we obtain are consistent with the theoretic bounds proved before. In particular we see that the parametric rate is reached for values of $\beta$ close to 0, at least in the Dirac case. These results also suggest that the correct minimax rate for $W_2$  probably corresponds to the upper bound given in Theorem \ref{theo:upperbounds} (that is,  when no further assumption is made on the
unknown distribution $\mu$).
\begin{figure}
\begin{center}
	\includegraphics[scale=0.9]{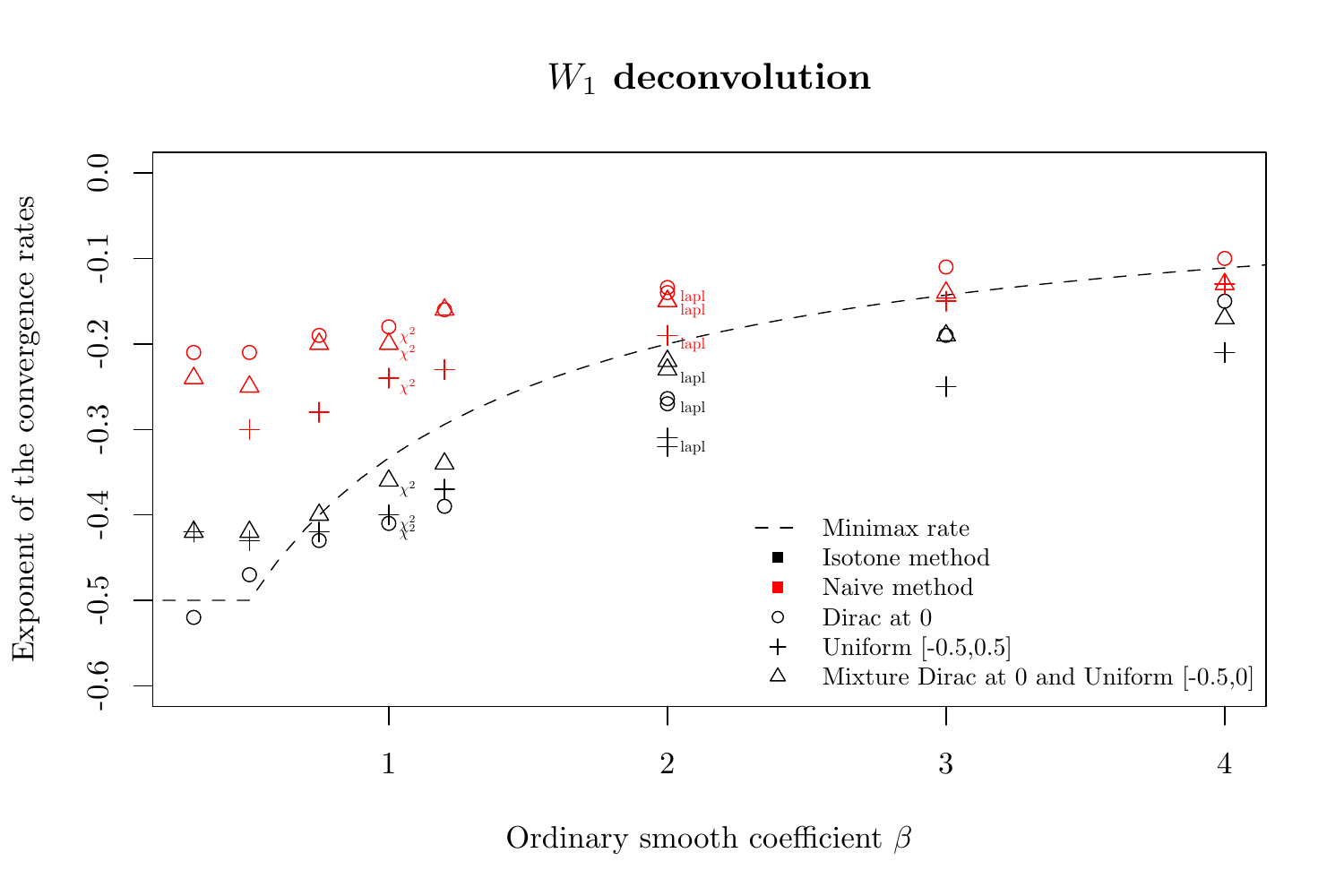}
	\includegraphics[scale=0.9]{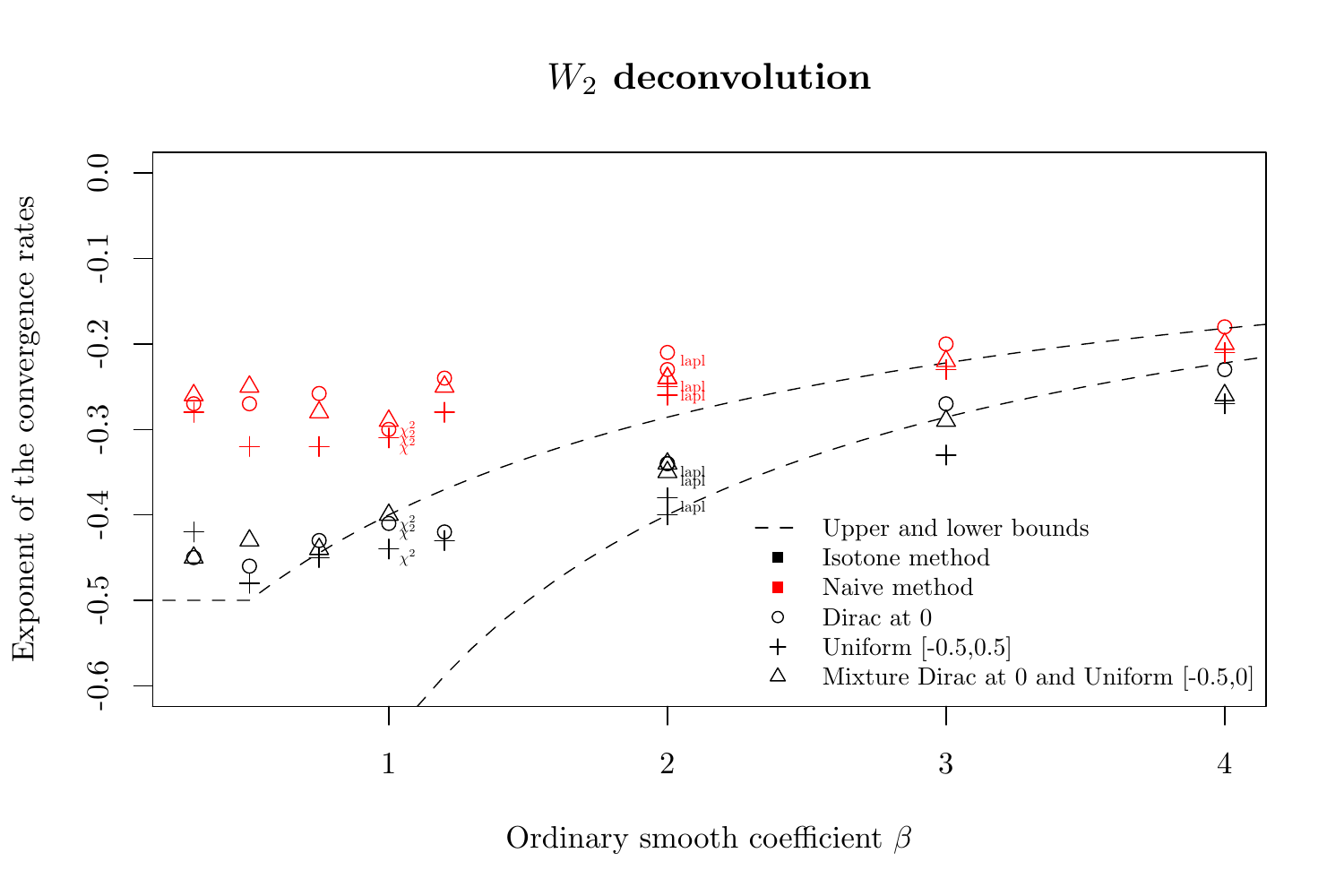}	
\caption{Estimated exponents of the convergence rates of the naive and the isotone deconvolution estimator for three distributions $\mu$. The exponents are given as functions of the ordinary smooth coefficient $\beta$. Regarding the noise distribution, the $\chi^2$ and the Laplace distributions are indicated directly on the graph, the others experiments have been done with the symmetrized Gamma distribution. The top graph corresponds to the $W_1$ deconvolution and the bottom one to the $W_2$ deconvolution.}
\label{fig:diracRates}
\end{center}
\end{figure}

\subsection{Cantor set experiment} \label{exp:Cantor}

\begin{figure}
\begin{center}
	\includegraphics[width=1 \textwidth]{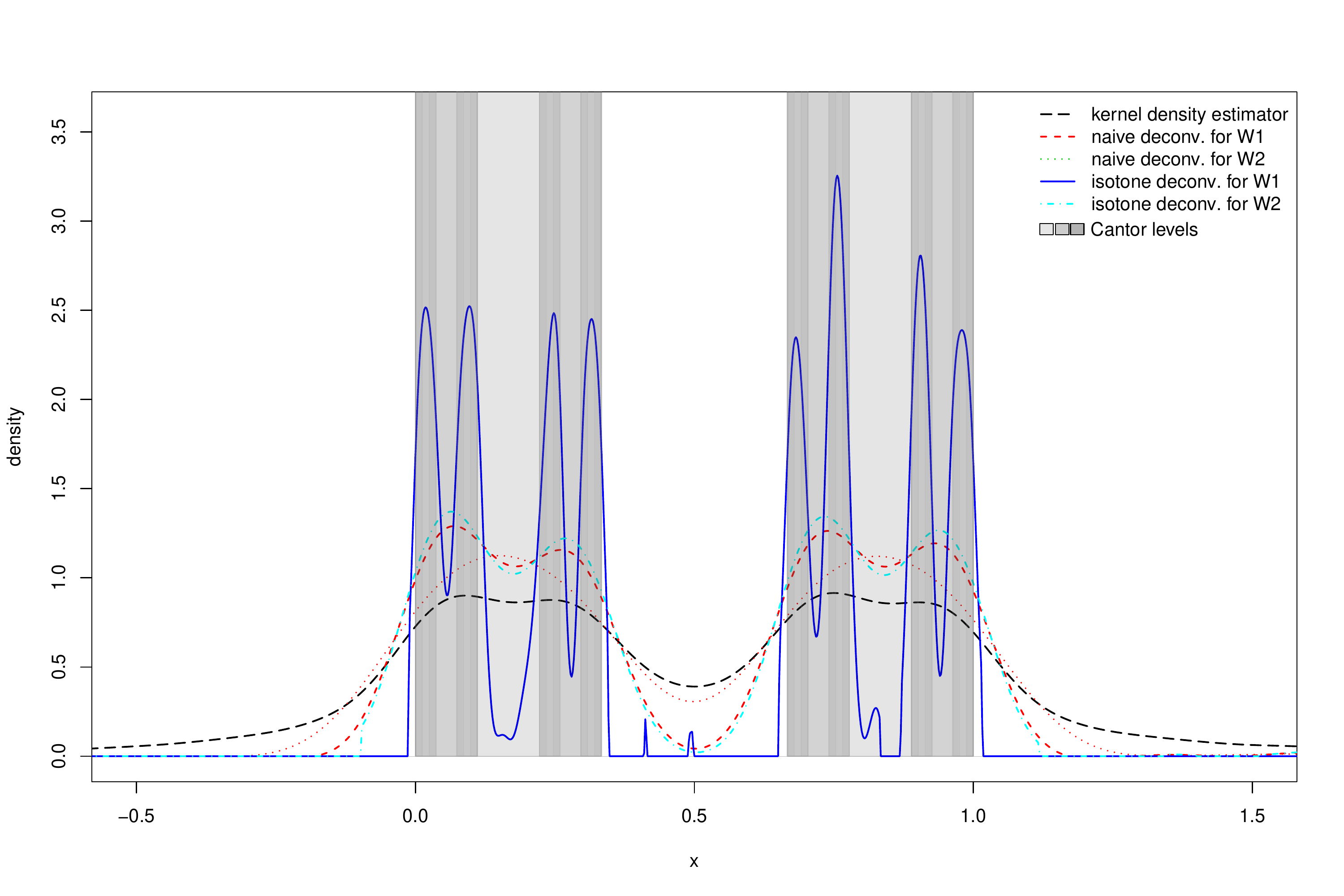}
\caption{Deconvolution of the uniform measure on the Cantor set.}
\label{fig: Cantor}
\end{center}
\end{figure}

 \begin{figure}
\begin{center}
	\includegraphics[width=0.7 \textwidth]{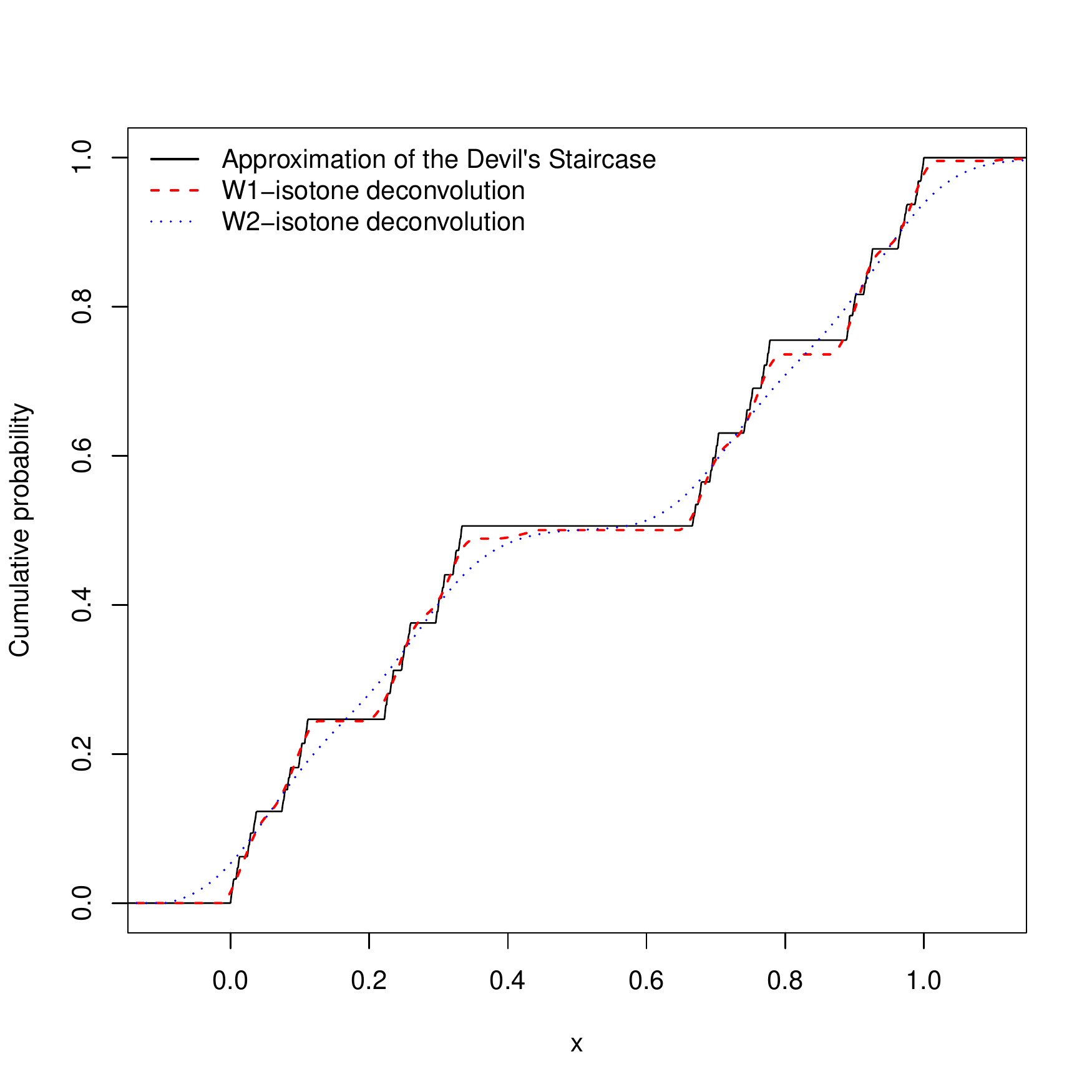}
\caption{Approximation of the Devil's staircase and distributions functions of the $W_1$ and $W_2$ isotone deconvolution estimators.}
\label{fig:Devilstaircase}
\end{center}
\end{figure}

We now illustrate the deconvolution method with a more original experiment. We take for $\mu$ the uniform distribution on the Cantor set {\gothfamily C}.
 Remember that the Cantor set can be defined by repeatedly deleting the open
  middle thirds of a set of line segments:
 $$ \mbox{\gothfamily C}=  \bigcap_{m \geq 1 } F_n$$
where $F_0 = [0, 1]$ and  $F_{m+1}$ is obtained by cutting out the middle thirds of all the intervals of $F_m$:
$F_1 = [0,\frac 1 3] \cup [ \frac 2 3,1]$ and $F_2 = [0, \frac 1 9] \cup [\frac 2 9 , \frac 1 3] \cup [\frac 2 3,\frac 7 9] \cup [\frac 8 9 , 1]$,  etc... The uniform measure $\mu_{\mbox{\gothfamily C}}$  on  \mbox{\gothfamily C} can be defined as the distribution of the random variable $X := 2 \sum_{k \geq 1} 3 ^{-k} B_k$ where  $(B_k)_{k \geq 1}$ is a sequence of independent random variables with Bernoulli distribution of parameter $1/2$.  Note that the Lebesgue measure of  {\gothfamily C}  is zero and thus the Lebesgue measure and $\mu_{\mbox{\gothfamily C}}$ are singular. The deconvolution estimators being densities for the Lebesgue measure, the Wasserstein distances are relevant metrics for comparing these  with $\mu_{\mbox{\gothfamily C}}$.

Let $\mu_{\mbox{\gothfamily C}, K}$ be the distribution of the random variable defined by the partial sum $\tilde X := 2 \sum_{k = 1} ^K  3 ^{-k} B_k$ where the $B_k$'s are defined as before. The distribution $\mu_{\mbox{\gothfamily C},  K}$ is an approximation of $\mu_{\mbox{\gothfamily C}}$ which can be computed in practice.  We simulate a sample of $n= 10^4$ observations from $\mu_{\mbox{\gothfamily C}, K}$ with $K = 100$.  These observations are contaminated by  random variables with symmetrized Gamma distribution
(the shape parameter is equal to 1/4 (so that $\beta = 0.5$) and the scale parameter
is equal to 1/2).

In Figure \ref{fig: Cantor},  the isotone estimators for $W_1$ and $W_2$ and the naive estimator are plotted on the first four levels $F_m$ of the Cantor set. The bandwidths are chosen by minimizing the Wasserstein risks over a grid, as in Section \ref{sub:RatesEstim}. This requires to approximate the quantile functions for the isotone deconvolution estimator and for the $\mu_{\mbox{\gothfamily C}}$. Regarding the quantile function of  $\mu_{\mbox{\gothfamily C}}$, we simulate a large sample according to $\mu_{\mbox{\gothfamily C},100}$  and we compute the corresponding empirical distribution function. This last cdf is an approximation of the so  called ``Devil's staircase'' (see Figure \ref{fig:Devilstaircase}). For the naive deconvolution estimator we find $h=0.011$ for $W_1$ and $h=0.018$ for $W_2$. For the $W_1$-isotone deconvolution estimator we find  $h = 0.002$ and $h = 0.01$ for the $W_2$-isotone estimator.  Note that these values are consistent with the fact that the
  bandwidth increases with the parameter $p$ of the Wasserstein metric, as shown by Theorem~\ref{theo:upperbounds}. On Figure \ref{fig: Cantor},  the $W_1$-isotone deconvolution estimator is able to ``see'' the first three  levels of the Cantor set and the three other deconvolution methods recover the first two levels. A kernel density estimator (with no deconvolution) only recovers the first level.

 \subsection{About the bandwidth choice} \label{sub:choiceh}

\begin{figure}
\begin{center}
	\includegraphics[scale=1]{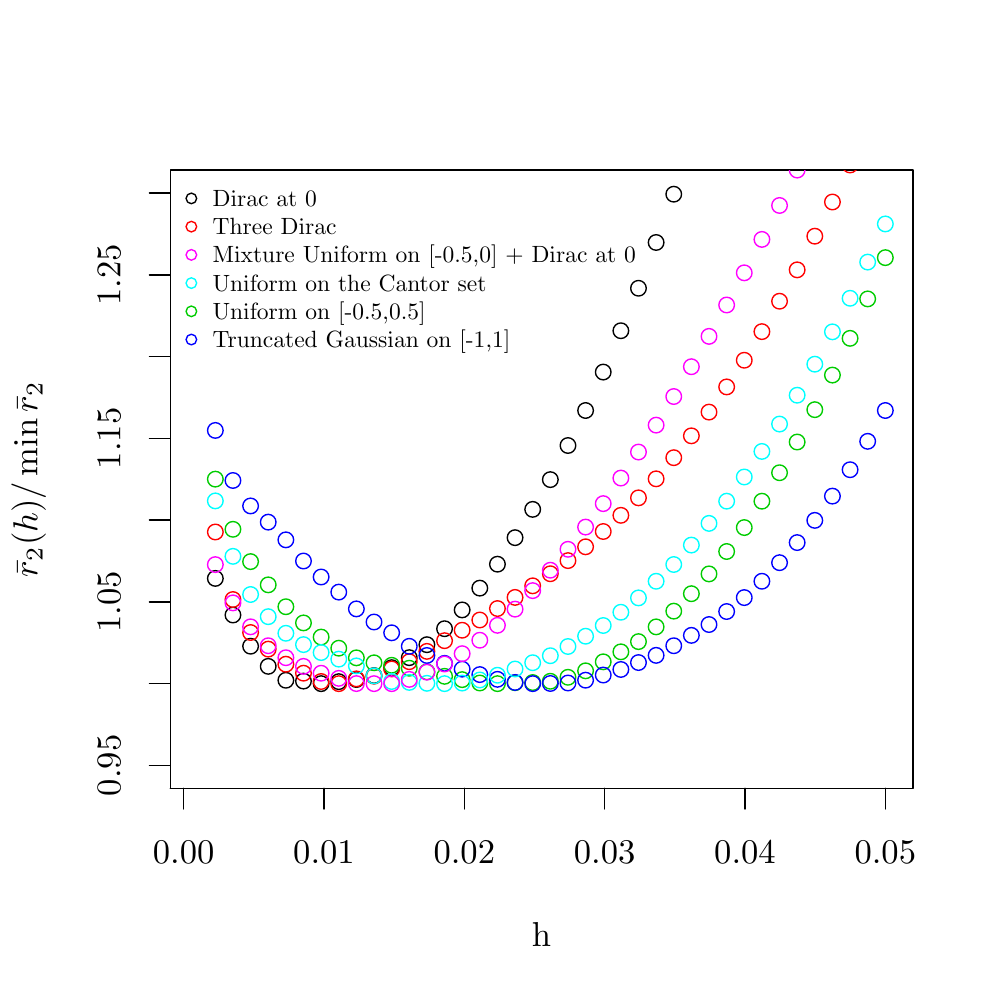}
\caption{Comparison of the locations of the minima of the $W_2$-risks for five distribution measures $\mu$. The noise distribution $\mu_\varepsilon$ is the symmetrized Gamma distribution with $\beta = 0.75$.
Each risk curve has been normalized by its minimum value for facilitating
the comparison.}
\label{fig: comparh}
\end{center}
\end{figure}

In practice, we need to choose a bandwidth $h$ for the deconvolution estimators.  As  was explained in  \cite{CCDM11} (see Remark 3 in this paper),  it seems that the influence of  the measure $\mu$ is weak. We now propose a simple experiment to check this principle. We choose for $\mu_\varepsilon$ the symmetrized gamma distribution with a shape parameter equal to $0.375$ ($\beta = 0.75$) and we simulate contaminated observations from the following various distributions:
\begin{itemize}
\item  Truncated standard Gaussian distribution on $[-1,1]$,
\item  Uniform distribution on $[-0.5,0.5]$,
\item Uniform distribution on the Cantor set,
\item Mixture of the Dirac distribution at 0 and the uniform distribution on $[-0.5,0]$,
\item Mixture of Dirac distributions at $-0.5$, $-0.2$ and $0.3$ with proportions $1/4$, $1/4$ and $1/2$,
\item Dirac distribution at $0$.
\end{itemize}
We focus here on the study of the $W_2$-isotone deconvolution estimator. Figure \ref{fig: comparh} compares the locations of the minimums of the five risk curves $h \mapsto \bar r  ^{\mbox{\tiny isot}}_{2,h}$ by averaging over 200 samples of 1000 contaminated observations. For this experiment, the sensitivity of the  minimum risk location to the distribution $\mu$  is not very large.

On another hand, from Figure \ref{fig:diracRates}, it seems
that the rates for the mixture Dirac Uniform are quite slow (in particular, they
are close to the minimax rates for $W_1$).

From these remarks, it seems that the bandwidth minimizing  the risk computed for the  mixture Dirac Uniform should be a reasonable choice for deconvolving other distributions.
Of course, this is in some sense a ``minimax choice'', and it will not give the appropriate
rate for measures which are  easier to estimate (for instance measures with smooth densities).


A bootstrap method in the spirit of \cite{delaigle2004bootstrap} may give a more satisfactory  answer to this problem. However, note that the use of the Wasserstein metric makes difficult the asymptotical analysis of the risk. This interesting problem is out of the scope of this paper, we intend to investigate it in a future work.

\begin{figure}
\begin{center}
	\includegraphics[scale=0.9]{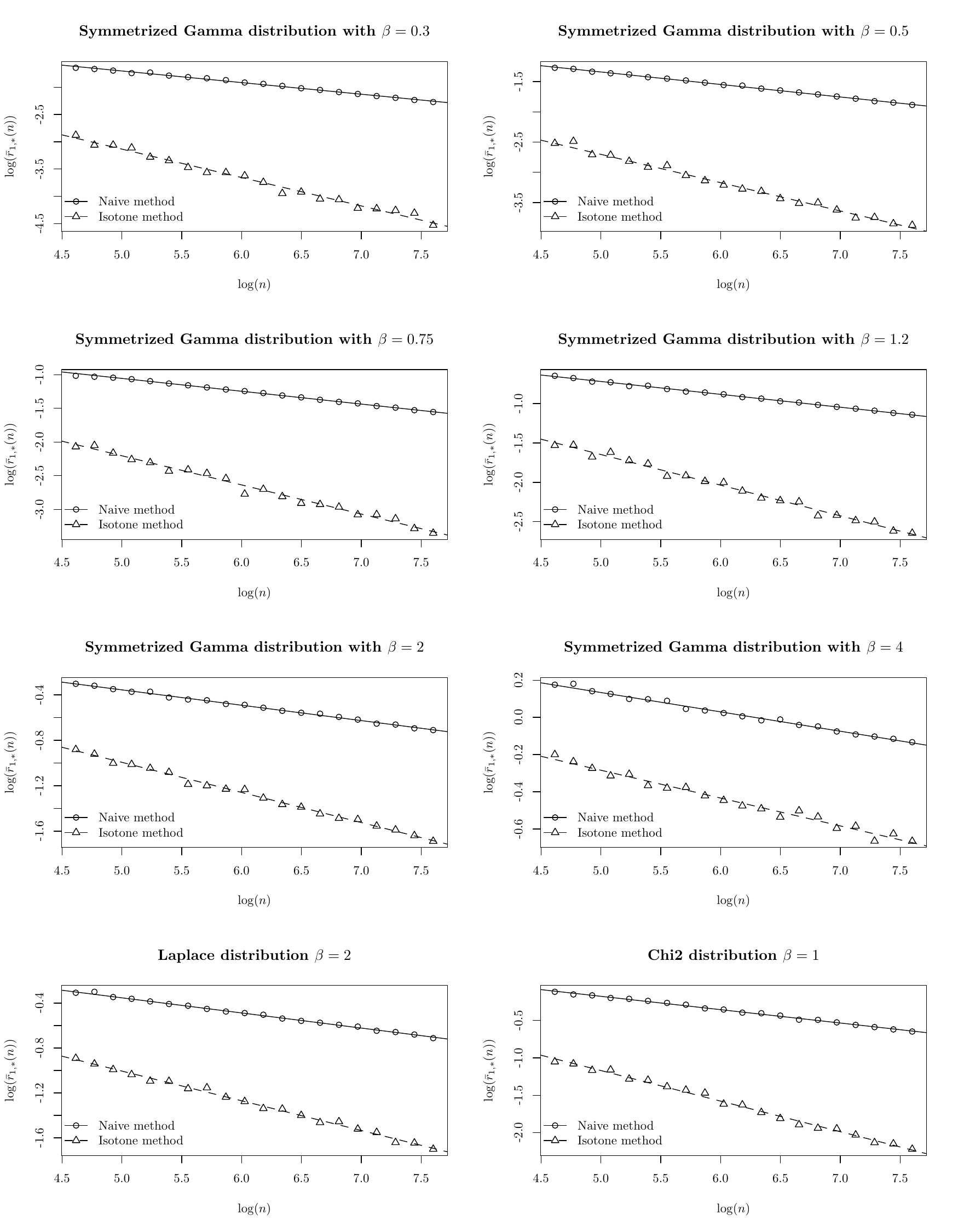}
\caption{Deconvolution of the Dirac distribution at zero observed with one of the noise distributions listed in Table~\ref{tab:distDirac}: log-log plots of the estimated $W_1$-risks for the naive method and the isotone method.}
\label{fig: LogLogW1}
\end{center}
\end{figure}

\begin{figure}
\begin{center}
	\includegraphics[scale=0.9]{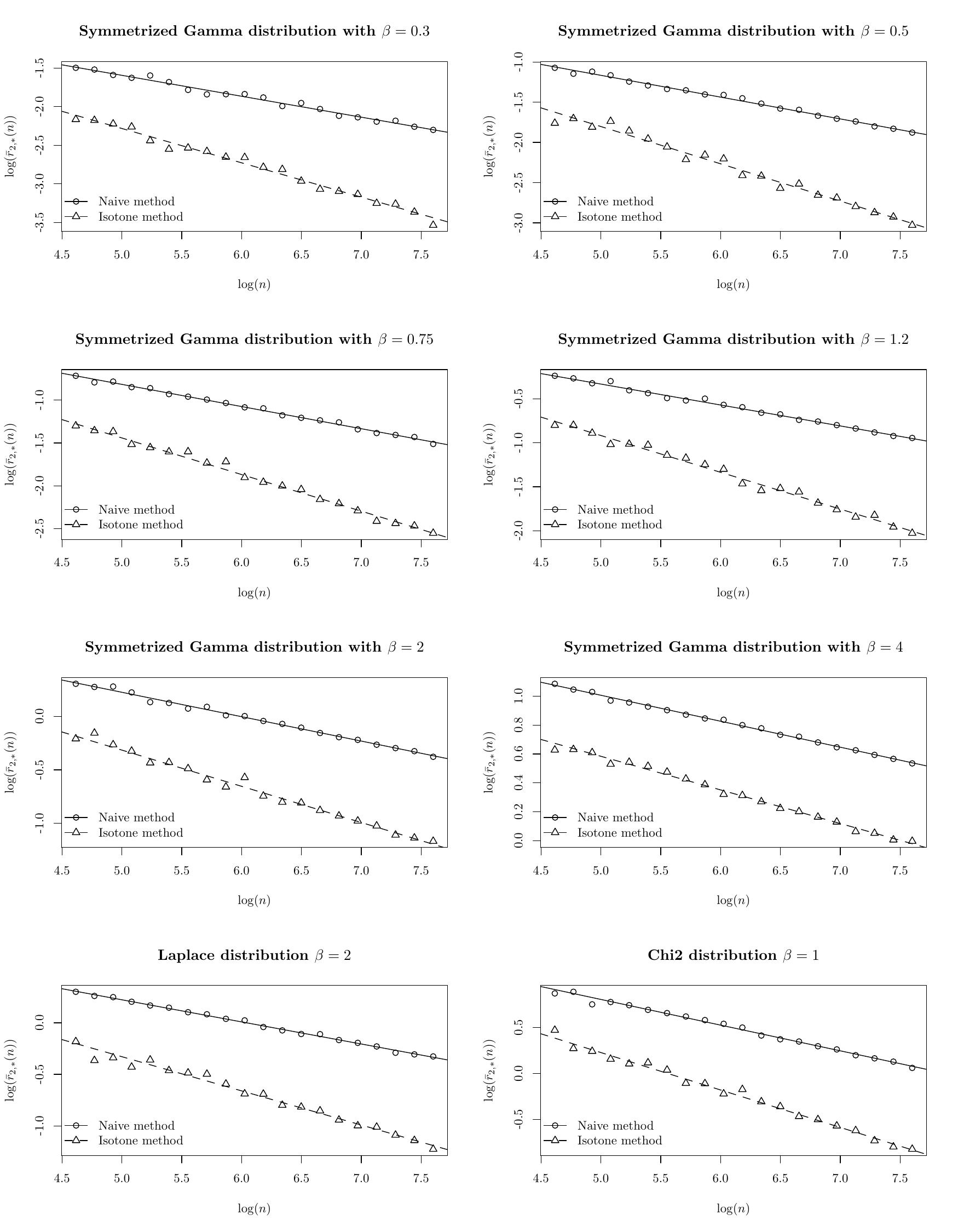}
\caption{Deconvolution of the Dirac distribution at zero observed with one of the noise distributions listed in Table~\ref{tab:distDirac}: log-log plots of the estimated $W_2$-risks for the naive method and the isotone method.}
\label{fig: LogLogW2}
\end{center}
\end{figure}

\section*{Acknowledgements}
The authors were supported by the ANR project TopData ANR-13-BS01-0008.
\bibliographystyle{plainnat}
\bibliography{BiblioWasserstein}

\begin{thebibliography}{26}
\providecommand{\natexlab}[1]{#1}
\providecommand{\url}[1]{\texttt{#1}}
\expandafter\ifx\csname urlstyle\endcsname\relax
  \providecommand{\doi}[1]{doi: #1}\else
  \providecommand{\doi}{doi: \begingroup \urlstyle{rm}\Url}\fi

\bibitem[Bobkov and Ledoux(2014)]{bobkov2014one}
S.~Bobkov and M.~Ledoux.
\newblock {One-dimensional empirical measures, order statistics and Kantorovich
  transport distances}.
\newblock \emph{Preprint}, 2014.

\bibitem[Butucea and Tsybakov(2008{\natexlab{a}})]{BT08a}
C.~Butucea and B~Tsybakov.
\newblock {Sharp optimality in density deconvolution with dominating bias. I}.
\newblock \emph{Theory Probab. Appl.}, 52:\penalty0 24--39, 2008{\natexlab{a}}.

\bibitem[Butucea and Tsybakov(2008{\natexlab{b}})]{BT08b}
C.~Butucea and B~Tsybakov.
\newblock {Sharp optimality in density deconvolution with dominating bias. II}.
\newblock \emph{Theory Probab. Appl.}, 52:\penalty0 237--249,
  2008{\natexlab{b}}.

\bibitem[Caillerie et~al.(2011)Caillerie, Chazal, Dedecker, and Michel]{CCDM11}
C.~Caillerie, F.~Chazal, J.~Dedecker, and B.~Michel.
\newblock {Deconvolution for the Wasserstein metric and geometric inference}.
\newblock \emph{Electron. J. Stat.}, 5:\penalty0 1394--1423, 2011.

\bibitem[Carlsson(2009)]{carlsson2009topology}
G.~Carlsson.
\newblock Topology and data.
\newblock \emph{Bull. Amer. Math. Soc.}, 46:\penalty0 255--308, 2009.

\bibitem[Carroll and Hall(1988)]{CH88}
R.J. Carroll and P.~Hall.
\newblock {Optimal rates of convergence for deconvolving a density}.
\newblock \emph{J. Amer. Statist. Assoc.}, 83:\penalty0 1184--1186, 1988.

\bibitem[Chazal et~al.(2011)Chazal, Cohen-Steiner, and M\'erigot]{CCSM11}
F.~Chazal, D.~Cohen-Steiner, and Q.~M\'erigot.
\newblock {Geometric inference for probability measures}.
\newblock \emph{Found. Comput. Math.}, 11:\penalty0 733--751, 2011.

\bibitem[Chazal et~al.(2014)Chazal, Fasy, Lecci, Michel, Rinaldo, and
  Wasserman]{chazal2014subsampling}
F.~Chazal, B.T. Fasy, F.~Lecci, B.~Michel, A.~Rinaldo, and L.~Wasserman.
\newblock Subsampling methods for persistent homology.
\newblock \emph{arXiv:1406.1901}, 2014.

\bibitem[Dattner et~al.(2011)Dattner, Goldenshluger, and Juditsky]{DGJ11}
I.~Dattner, A.~Goldenshluger, and A.~Juditsky.
\newblock {On deconvolution of distribution functions}.
\newblock \emph{Ann. Statist.}, 39:\penalty0 2477--2501, 2011.

\bibitem[Dedecker and Michel(2013)]{DM13}
J.~Dedecker and B.~Michel.
\newblock {Minimax rates of convergence for Wasserstein deconvolution with
  supersmooth errors in any dimension}.
\newblock \emph{J. Multivar. Anal.}, 122:\penalty0 278--291, 2013.

\bibitem[del Barrio et~al.(1999)del Barrio, Gin\'e, and Matr\'an]{BGM99}
E.~del Barrio, E.~Gin\'e, and C.~Matr\'an.
\newblock {The central limit theorem for the Wasserstein distance between the
  empirical and the true distributions}.
\newblock \emph{Ann. Probab.}, 27:\penalty0 1009--1971, 1999.

\bibitem[del Barrio et~al.(2005)del Barrio, Gin\'e, and Utzet]{BGU05}
E.~del Barrio, E.~Gin\'e, and F.~Utzet.
\newblock {Asymptotics for ${\mathbb L}_2$ functionals of the empirical
  quantile process, with applications to tests of fit based on weighted
  Wasserstein distances}.
\newblock \emph{Bernoulli}, 11:\penalty0 131--189, 2005.

\bibitem[Delaigle and Gijbels(2004)]{delaigle2004bootstrap}
A.~Delaigle and I.~Gijbels.
\newblock Bootstrap bandwidth selection in kernel density estimation from a
  contaminated sample.
\newblock \emph{Ann. I. Stat. Math.}, 56\penalty0 (1):\penalty0 19--47, 2004.

\bibitem[Dereich et~al.(2013)Dereich, Scheutzow, and Schottstedt]{DSS13}
S.~Dereich, M.~Scheutzow, and R.~Schottstedt.
\newblock {Constructive quantization: Approximation by empirical measures}.
\newblock \emph{Ann. Inst. H. Poincar\'e Probab. Statist.}, 49:\penalty0
  1183--1203, 2013.

\bibitem[{\`E}bralidze(1971)]{E71}
{\v{S}}.S. {\`E}bralidze.
\newblock Inequalities for the probabilities of large deviations in terms of
  pseudomoments.
\newblock \emph{Teor. Verojatnost. i Primenen.}, 16:\penalty0 760--765, 1971.

\bibitem[Fan(1991{\natexlab{a}})]{Fan91}
J.~Fan.
\newblock {On the optimal rates of convergence for nonparametric deconvolution
  problems}.
\newblock \emph{Ann. Stat.}, 19:\penalty0 1257--1272, 1991{\natexlab{a}}.

\bibitem[Fan(1991{\natexlab{b}})]{Fan91a}
J.~Fan.
\newblock {Global behavior of deconvolution kernel estimates}.
\newblock \emph{Statist. Sinica}, 2:\penalty0 541--551, 1991{\natexlab{b}}.

\bibitem[Fan(1993)]{Fan93}
J.~Fan.
\newblock {Adaptively local one-dimensional subproblems with application to a
  deconvolution problem}.
\newblock \emph{Ann. Stat.}, 21:\penalty0 600--610, 1993.

\bibitem[Fournier and Guillin(2013)]{FG}
N.~Fournier and A.~Guillin.
\newblock On the rate of convergence in wasserstein distance of the empirical
  measure.
\newblock \emph{Preprint}, 2013.

\bibitem[Guibas et~al.(2013)Guibas, Morozov, and
  M{\'e}rigot]{guibas2013witnessed}
L.~Guibas, D.~Morozov, and Q.~M{\'e}rigot.
\newblock Witnessed k-distance.
\newblock \emph{Discrete Comput. Geom.}, 49:\penalty0 22--45, 2013.

\bibitem[Hall and Lahiri(2008)]{HL}
P.~Hall and S.N. Lahiri.
\newblock {Estimation of distributions, moments and quantiles in deconvolution
  problems}.
\newblock \emph{Ann. Statist}, 36:\penalty0 2110--2134, 2008.

\bibitem[Mair et~al.(2009)Mair, Hornik, and de~Leeuw]{mair2009isotone}
P.~Mair, K.~Hornik, and J.~de~Leeuw.
\newblock Isotone optimization in $\text{R}$: pool-adjacent-violators algorithm
  ($\text{PAVA}$) and active set methods.
\newblock \emph{J. Stat. Softw.}, 32\penalty0 (5):\penalty0 1--24, 2009.

\bibitem[Meister(2009)]{Mei09}
A.~Meister.
\newblock \emph{{Deconvolution Problems in Nonparametric Statistics}}.
\newblock Lecture Notes in Statistics. Springer, 2009.

\bibitem[Rachev and R\"uschendorf(1998)]{RR98}
S.T. Rachev and L.~R\"uschendorf.
\newblock \emph{{Mass transportation problems}}, volume~II of \emph{Probability
  and its Applications}.
\newblock Springer-Verlag, 1998.

\bibitem[van~der Vaart and Wellner(1996)]{vdVW96}
A.W. van~der Vaart and J.A. Wellner.
\newblock \emph{{Weak Convergence and Empirical Processes}}.
\newblock Springer series in Statistics. Springer, 1996.

\bibitem[Villani(2008)]{Vil08}
C.~Villani.
\newblock \emph{{Optimal Transport: Old and New}}.
\newblock Grundlehren Der Mathematischen Wissenschaften. Springer-Verlag, 2008.

\end{thebibliography}

\end{document}